\theoremstyle{plain}{
    \newtheorem{theorem}{Theorem}[section]
    \newtheorem{lem}[theorem]{Lemma}
    \newtheorem{cor}[theorem]{Corollary}
    \newtheorem{prop}[theorem]{Proposition}
    
}
\theoremstyle{definition}{
    \newtheorem{defn}[theorem]{Definition}
    \newtheorem{defn-thm}[theorem]{Definition-Theorem}
    \newtheorem{example}[theorem]{Example}
    \newtheorem{notation}[theorem]{Notation}
}
\theoremstyle{remark}{
    \newtheorem{rem}[theorem]{Remark}

}
\def\lin{\mathrm{lin}}
\def\reg{\mathrm{reg}}
\newcommand{\C}{\mathbb{C}}
\newcommand{\E}{\mathbb{E}}
\newcommand{\F}{\mathbb{F}}
\newcommand{\K}{\mathbb{K}}
\newcommand{\bL}{\mathbb{L}}
\newcommand{\N}{\mathbb{N}}
\newcommand{\bP}{\mathbb{P}}
\newcommand{\Q}{\mathbb{Q}}
\newcommand{\R}{\mathbb{R}}
\newcommand{\Z}{\mathbb{Z}}
\newcommand{\cE}{\mathcal{E}}
\newcommand{\cF}{\mathcal{F}}
\newcommand{\cG}{\mathcal{G}}
\newcommand{\cO}{\mathcal{O}}
\newcommand{\cT}{\mathcal{T}}
\newcommand{\cV}{\mathcal{V}}
\newcommand{\fK}{\mathfrak{K}}
\newcommand{\rP}{\mathrm{P}}
\newcommand{\rQ}{\mathrm{Q}}
\newcommand{\rV}{\mathrm{V}}
\newcommand{\rW}{\mathrm{W}}
\def\Id{\mathrm{Id}}
\def\<{\langle}
\def\>{\rangle}
\newcommand{\nothing}{\varnothing}
\newcommand{\imp}{\longrightarrow}
\DeclareMathOperator{\card}{card}
\DeclareMathOperator{\vol}{vol}
\DeclareMathOperator{\codim}{codim}
\DeclareMathOperator{\rk}{rk}
\DeclareMathOperator{\Hom}{Hom}
\DeclareMathOperator{\Span}{Span}
\DeclareMathOperator{\Spec}{Spec}
\DeclareMathOperator{\WDiv}{WDiv}
\DeclareMathOperator{\Cl}{Cl}
\DeclareMathOperator{\Pic}{Pic}
\DeclareMathOperator{\Amp}{Amp}
\DeclareMathOperator{\Stab}{Stab}
\DeclareMathOperator{\sStab}{sStab}
\DeclareMathOperator{\Cone}{Cone}
\DeclareMathOperator{\Conv}{Conv}
\newcolumntype{R}[1]{>{\raggedleft\arraybackslash} p{#1}}
\newcolumntype{L}[1]{>{\raggedright\arraybackslash} p{#1}}
\newcolumntype{C}[1]{>{\centering\arraybackslash} p{#1}}
\begin{document}

\title[Stability of equivariant logarithmic tangent sheaves]{Stability of
equivariant logarithmic tangent sheaves on toric varieties of Picard rank two}

\author[A. Napame]{Achim NAPAME}

\address{Univ Brest, UMR CNRS 6205, Laboratoire de Mathématiques de Bretagne
Atlantique, France}

\email{achim.napame@univ-brest.fr}

\keywords{Toric varieties, logarithmic tangent sheaves, slope-stability}

\begin{abstract}
For an equivariant log pair $(X, D)$ where $X$ is a normal toric variety and $D$ a
reduced Weil divisor, we study slope-stability of the logarithmic tangent sheaf
$\mathcal{T}_{X}(- \log D)$. We give a complete description of divisors $D$ and
polarizations $L$ such that $\mathcal{T}_{X}(- \log D)$ is (semi)stable with respect
to $L$ when $X$ has a Picard rank one or two.
\end{abstract}

\maketitle

\section{Introduction}\label{sec:intro}
The notion of slope-stability was first introduced by Mumford \cite{Mum62} in his
construction of moduli spaces of vector bundles over a curve. This notion was generalized
in higher dimension by Takemoto \cite{Tak72}.
A vector bundle, or more generally a torsion-free sheaf $\cE$ on a complex projective
variety $X$ is said to be slope-stable (resp. semistable) with respect to a polarization
$L$, if for any proper coherent subsheaf $\cF$ of $\cE$ with $0 < \rk(\cF) < \rk(\cE)$,
one has $\mu_{L}(\cF) < \mu_{L}(\cE)$ (resp. $\mu_{L}(\cF) \leq \mu_{L}(\cE)$)
where the {\it slope} of $\cE$ with respect to $L$ is given by
$$
\mu_{L}(\cE) = \dfrac{c_1(\cE) \cdot L^{\dim(X)-1}}{\rk(\cE)} .
$$
As the study of stability of reflexive sheaves is a difficult problem, we are interested
by the category of torus equivariant reflexive sheaves over normal toric varieties.
Using the description of equivariant reflexive sheaves over toric varieties in terms
of families of filtrations given by Klyachko \cite{Kly90} and Perling \cite{Per04},
Kool in \cite[Proposition 4.13]{Koo11} showed that it is enough to compare slopes for
equivariant and reflexive saturated subsheaves.

Tangent sheaves are natural examples of equivariant reflexive sheaves on normal toric
varieties. Using its equivariant structure, Hering--Nill--S\"uss \cite{HNS19} and
Dasgupta--Dey--Khan \cite{DDK20} studied slope-stability of the tangent bundle of smooth
projective toric varieties of Picard rank one or two.
Inspired by Iitaka's philosophy, in this paper, we extend the results of
\cite{DDK20, HNS19} to the case of equivariant logarithmic pairs $(X, D)$.
More precisely, if $X$ is a normal toric variety and $D$ a reduced snc (simple normal
crossing) divisor such that the logarithmic tangent sheaf $\cT_{X}(- \log D)$ is
equivariant, we are interested by the set of polarizations $L$ on $X$ such that
$\cT_{X}(- \log D)$ is (semi)stable with respect to $L$.

We first note that the logarithmic tangent sheaf $\cT_{X}(- \log D)$ is equivariant if
and only if $D$ is a torus invariant divisor of $X$.
For a toric variety $X$ with fan $\Sigma$ in $N \otimes_\Z \R$, we denote by $D_\rho$
the torus invariant divisor of $X$ corresponding to the ray $\rho \in \Sigma(1)$
(see Section \ref{sec:background} for precise definitions).
Then we have:

\begin{theorem}\label{theo:filt-logtangent-intro}
Let $\Delta \subseteq \Sigma(1)$ and $D= \sum_{\rho \in \Delta} D_\rho$.
The family of filtrations
$\left(E, \{ E^{\rho}(j) \}_{\rho \in \Sigma(1), \, j \in \Z} \right)$
of the logarithmic tangent sheaf $\cT_{X}(-\log D)$ is given by
$$
E^{\rho}(j) = \left\lbrace
\begin{array}{ll}
0 & \text{if}~ j \leq -1 \\ N \otimes_{\Z} \C & \text{if}~ j \geq 0
\end{array}
\right.
\qquad \text{if}~ \rho \in \Delta
$$
and by
$$
E^{\rho}(j) = \left\lbrace
\begin{array}{ll}
0 & \text{if}~ j \leq -2 \\
\Span(u_{\rho}) & \text{if}~ j = -1 \\
N \otimes_{\Z} \C & \text{if}~ j \geq 0
\end{array}
\right.
\qquad \text{if}~ \rho \notin \Delta
$$
where $u_\rho \in N$ is the minimal generator of the ray $\rho$.
\end{theorem}

\begin{rem}
We will see in Section \ref{sec:filtration-logtangent} that if $\Delta = \Sigma(1)$,
then $\cT_{X}(- \log D)$ is isomorphic to the trivial sheaf of rank $\dim(X)$ and if
$\Delta = \nothing$, then $\cT_{X}(- \log D)$ is the tangent sheaf $\cT_{X}$.
\end{rem}

By Theorem \ref{theo:filt-logtangent-intro} and the fact that
$|\Sigma(1)| = \dim(X) + \rk( \Cl(X))$
on complete normal toric varieties $X$, we show that:

\begin{prop}\label{prop:logtangent-unstability-intro}
If $1 + \rk(\Cl(X)) \leq |\Delta| \leq |\Sigma(1)|-1$, then for any polarization $L$, the
logarithmic tangent sheaf $\cT_{X}(- \log D)$ is unstable with respect to $L$.
\end{prop}

According to this proposition, it is therefore sufficient to study the stability of
$\cT_{X}(- \log D)$ when $|\Delta| \leq \rk(\Cl(X))$. Thus, in this paper we study the
case where $X$ is smooth, $\rk \Cl(X) \in \{1, 2 \}$ and $1 \leq |\Delta| \leq \rk \Cl(X)$.
Note that the only smooth projective toric variety with Picard number one is the
projective space $\bP^n$.

\begin{prop}
Let $D$ be an invariant hyperplane section of $\bP^n$. Then, the logarithmic tangent
sheaf $\cT_{\bP^n}(- \log D)$ is polystable with respect to $\cO_{\bP^n}(1)$.
\end{prop}

By \cite[Theorem 1]{Kle88}, every smooth toric variety of Picard rank two is of the form
$X = \bP ( \cO_{\bP^s} \oplus \bigoplus_{i=1}^{r} \cO_{\bP^s}(a_i) )$ with
$r, s \in \N^{\ast}$ and $a_1, \ldots, a_r \in \N$ such that $a_1 \leq \ldots \leq a_r$.
Moreover, $X$ blown down to $\bP^{r+s}$ if and only if
$(a_1, \ldots, a_r) = (0, \ldots, 0, 1)$.
We denote by $\pi: X \rightarrow \bP^s$ the projection map.
Let $\cV$ be a vector bundle associated to the locally free sheaf
$$
\cO_{\bP^s} \oplus \cO_{\bP^s}(-a_1) \oplus \ldots \oplus \cO_{\bP^s}(-a_r) .
$$
Then the irreducible invariant divisors of $X$ are given by
$$
\left\lbrace
\begin{array}{ll}
D_{w_j} = \pi^{-1}( \{(z_0: \ldots : z_s) \in \bP^s : z_j = 0 \})
& \text{for}~ 0 \leq j \leq s \\
D_{v_i} = \{s_i = 0\} & \text{for}~ 0 \leq i \leq r
\end{array}
\right.
$$
where the $\{s_i = 0\}$ are the relative hyperplane sections associated to the line
subbundles of $\cV^\vee$.
If $L = \pi^\ast \cO_{\bP^s}(\alpha) \otimes \cO_{X}(\beta)$ is a polarization of $X$,
according to the value of $\nu := \alpha/\beta$, in Tables
\ref{tab:stab-logtgt-prodprojective}, \ref{tab:stab-logtgt-toricpic2-1},
\ref{tab:stab-logtgt-toricpic2-2} and \ref{tab:stab-logtgt-toricpic2-3}, we give a
complete classification of reduced divisors $D$ and polarizations $L$ on $X$ such that the
equivariant logarithmic tangent sheaf $\cT_{X}(- \log D)$ is (semi)stable with respect to
$L$.
In particular:

\begin{prop}
Let $X = \bP ( \cO_{\bP^s} \oplus \bigoplus_{i=1}^{r} \cO_{\bP^s}(a_i) )$ with
$a_1 = \ldots = a_r = 0$. Then for any
\begin{align*}
D \in \{D_{v_i} : 0 \leq i \leq r \} & \cup \{D_{w_j} : 0 \leq j \leq s \} \\
& \cup \{D_{v_i} + D_{w_j} : 0 \leq i \leq r, 0 \leq j \leq s \}
\end{align*}
$\cT_{X}(- \log D)$ is polystable with respect to $L$ if and only if $L$ is a power of
the polarization corresponding to $- (K_X + D)$.
\end{prop}

For $a_r \geq 1$, we show that:

\begin{theorem}\label{theo:logtgt-intro-Dvr}
Let $X= \bP( \cO_{\bP^s} \oplus \bigoplus_{i=1}^{r} \cO_{\bP^s}(a_i) )$ with $a_r \geq 1$.
There are $\alpha, \beta \in \N^\ast$ such that the logarithmic tangent sheaf
$\cT_{X}(- \log D_{v_r})$ is (semi)stable with respect to
$\pi^{\ast} \cO_{\bP^s}(\alpha) \otimes \cO_{X}(\beta)$ if and only if $a_r = 1$ and
$a_{r-1} = 0$. Moreover, if $a_r = 1$ and $a_{r-1} = 0$, then
$\cT_{X}(- \log D_{v_r})$ is stable (resp. semistable) with respect to
$\pi^{\ast} \cO_{\bP^s}(\alpha) \otimes \cO_{X}(\beta)$ if and only if
$0 < \frac{\alpha}{\beta} < \nu_0$ (resp. $0 < \frac{\alpha}{\beta} \leq \nu_0$)
where $\nu_0$ is the unique positive root of
$$
\rP_0(x) = \sum_{k = 0}^{s-1} \dbinom{s+r-1}{k} x^k - s \dbinom{s+r-1}{s} x^s ~.
$$
\end{theorem}

Theorem \ref{theo:logtgt-intro-Dvr} can be seen as an extension
of \cite[Theorem 1.4]{HNS19} to the case of the logarithmic pair $(X, D_{v_r})$.
Indeed, in \cite[Theorem 1.4]{HNS19} it is shown that for
$X= \bP( \cO_{\bP^s} \oplus \bigoplus_{i=1}^{r} \cO_{\bP^s}(a_i) )$ with $a_r \geq 1$,
there are $\alpha, \beta \in \N^\ast$ such that the tangent sheaf $\cT_{X}$ is
(semi)stable with respect to $\pi^{\ast} \cO_{\bP^s}(\alpha) \otimes \cO_{X}(\beta)$
if and only if $a_r = 1$ and $a_{r-1} = 0$.

\begin{rem}
In the logarithmic case, it is not just varieties $X$ which blown down to $\bP^{r+s}$
which admit divisors $D$ such that $\cT_{X}(- \log D)$ is stable.
If $X = \bP( \cO_{\bP^s} \oplus \bigoplus_{i=1}^{r} \cO_{\bP^s}(a_i))$
with $a_r \geq 1$ and $(a_1, \ldots, a_r)$ not necessarily equal to $(0, \ldots, 0, 1)$,
in Theorems \ref{theo:logtgt-toricpic-Dv0} and \ref{theo:logtgt-toricpic-Dv01},
we will show that there are polarizations $L$ on $X$ such that $\cT_{X}(- \log D_{v_0})$
is (semi)stable with respect to $L$ if and only if $a_1 = \ldots = a_r$ and
$(r-1)a_{r} < (s+1)$.
\end{rem}

\begin{rem}
For these studies of stability when $\rk \Cl(X) = 2$, we use the calculations made in
\cite{HNS19} but we simplify their arguments using Lemma
\ref{lem:stability-logtangent-toricpic2}.
\end{rem}

\subsection*{Organization}
In Section \ref{sec:background} we recall the necessary background on toric varieties,
equivariant reflexive sheaves and their families of filtrations. We also recall the
notions of slope-stability. In Section \ref{sec:log-sheaves}, we study the logarithmic
tangent sheaf. We prove Theorem \ref{theo:filt-logtangent-intro} and Proposition
\ref{prop:logtangent-unstability-intro}.
Sections \ref{sec:stability-logtangent} and \ref{sec:stab-logtangent-toricpic2} deal
with the study of the stability of $\cT_{X}(- \log D)$ when $\rk \Cl(X) = 2$.
In Section \ref{sec:logdpezzo-pairs}, we apply the results of the paper on Hirzebruch
surfaces.

\subsection*{Acknowledgments}
I would like to thank my advisor Carl \textsc{Tipler} for our discussions on this
subject and also Henri \textsc{Guenancia} for some references.

\section{Toric varieties, equivariant sheaves and stability notions}
\label{sec:background}

In this section, we present the different notions that will be discussed in this paper:
toric varieties \cite{CLS}, equivariant sheaves \cite{Per04} and stability of sheaves
\cite{Tak72}.

\subsection{Normal toric varieties}\label{sec:toric-varieties}
A $n$-dimensional {\it toric variety} is an irreducible variety $X$ containing a torus
$T \simeq (\C^{\ast})^n$ as a Zariski open subset such that the action of $T$ on itself
extends to an algebraic action of $T$ on $X$.

Let $N$ be a rank $n$ lattice and $M = \Hom_{\Z}(N, \Z)$ be its dual with pairing
$\< \cdot, \cdot \> : M \times N \rightarrow \Z$. Then $N$ is the {\it lattice of
one-parameter subgroups} of the $n$-dimensional complex torus
$T_N:= N \otimes_{\Z} \C^\ast = \Hom_{\Z}(M, \C^\ast)$.
We call $M$ the {\it lattice of characters} of $T_N$.
For $\K=\text{$\R$ or $\C$}$, we define $N_\K = N \otimes_\Z \K$ and
$M_\K = M \otimes_\Z \K$. We denote by $\chi^m: T_N \rightarrow \C^\ast$ the character
corresponding to $m \in M$ and by $\lambda^u: \C^\ast \rightarrow T_N$ the one-parameter
subgroup corresponding to $u \in N$.

A {\it fan} $\Sigma$ in $N_{\R}$ is a set of rational strongly convex polyhedral cones
in $N_{\R}$ such that:
\begin{itemize}
\item Each face of a cone in $\Sigma$ is also a cone in $\Sigma$;
\item The intersection of two cones in $\Sigma$ is a face of each.
\end{itemize}
We will denote $\tau \preceq \sigma$ the inclusion of a face $\tau$ in $\sigma \in \Sigma$.
A cone $\sigma$ in $N_{\R}$ is {\it smooth} if its minimal generators form part of a
$\Z$-basis of $N$. We say that $\sigma$ is {\it simplicial} if its minimal generators are
linearly independent over $\R$.
A fan $\Sigma$ is {\it smooth} (resp. {\it simplicial}) if every cone $\sigma$ in $\Sigma$
is smooth (resp. {\it simplicial}).
The {\it support} of $\Sigma$ is given by
$|\Sigma| := \bigcup_{\sigma \in \Sigma}{\sigma}$
and we say that $\Sigma$ is {\it complete} if $|\Sigma| = N_{\R}$.

\begin{notation}
For a finite subset $S \subseteq N$, we denote by $\Cone(S)$ the cone generated by $S$.
For a fan $\Sigma$, we denote by
\begin{itemize}
\item $\Sigma(r)$ the set of $r$-dimensional cones of $\Sigma$;
\item $u_{\rho} \in N$ the minimal generator of $\rho \in \Sigma(1)$.
\end{itemize}
Elements of $\Sigma(1)$ will be called {\it rays}.
\end{notation}

For $\sigma \in \Sigma$, let $U_{\sigma} = \Spec( \C[S_{\sigma}])$ where
$\C[S_{\sigma}]$ is the semi-group algebra of
$$
S_{\sigma} = \sigma^{\vee} \cap M = \{ m \in M \, : \, \langle m, \, u \rangle \geq 0
~ \text{for all}~ u \in \sigma \} ~~.
$$
If $\sigma, \, \sigma' \in \Sigma$, we have $U_{\sigma} \cap U_{\sigma'} =
U_{\sigma \cap \sigma'}$.
We denote by $X_{\Sigma}$ the toric variety associated to a fan $\Sigma$;
$X_{\Sigma}$ is obtained by gluing the affine charts $(U_{\sigma})_{\sigma \in \Sigma}$.
The variety $X_{\Sigma}$ is normal and its torus is $T_N$.
As every separated normal toric variety comes from a fan, from now on, a normal toric
variety will be defined by a fan.

Let $X$ be the toric variety associated to a fan $\Sigma$ in $N_{\R}$.
For any $\sigma \in \Sigma$, there is a point $\gamma_\sigma \in U_{\sigma}$ called the
{\it distinguished point} of $\sigma$ such that the torus orbit $O(\sigma)$ corresponding
to $\sigma$ is given by $O(\sigma) = T \cdot \gamma_\sigma$. We will use the following
result:

\begin{theorem}[Orbit-Cone Correspondence {\cite[Theorem 3.2.6]{CLS}}]
\label{theo:orbit-cone}
Let $X$ be the toric variety associated to a fan $\Sigma$ with torus $T$. Then
\begin{enumerate}
\item
There is a bijective correspondence
$$
\begin{array}{rcl}
\{ \text{Cones $\sigma$ in $\Sigma$} \} & \longleftrightarrow &
\{\text{$T$-orbits in $X$} \} \\
\sigma & \longleftrightarrow & O(\sigma)
\end{array}
$$
with
$\dim O(\sigma) = \dim N_{\R} - \dim \sigma$.
\item
The affine open subset $U_{\sigma}$ is the union of orbits
$$U_{\sigma} = \bigcup_{\tau \preceq \sigma}{O(\tau)}.$$
\item
$\tau \preceq \sigma$ if and only if $O(\sigma) \subseteq \overline{O(\tau)}$, and
$$\overline{O(\tau)} = \bigcup_{\tau \preceq \sigma}{O(\sigma)}$$ where
$\overline{O(\tau)}$ denotes the closure in both the classical and Zariski topologies.
\end{enumerate}
\end{theorem}

\begin{notation}
For any $\rho \in \Sigma(1)$, we set $D_\rho = \overline{O(\rho)}$.
\end{notation}

For any $\rho \in \Sigma(1)$, $D_\rho$ defines a $T$-invariant Weil divisor of $X$.
Divisors of the form $\sum_{\rho \in \Sigma(1)} a_{\rho} D_{\rho}$ are precisely the
invariant divisors under the torus action on $X$. Thus,
$$
\WDiv_{T}(X) := \bigoplus_{\rho \in \Sigma(1)} \Z D_\rho
$$
is the group of invariant Weil divisors on $X$. In particular,

\begin{theorem}[{\cite[Theorem 8.2.3]{CLS}}]\label{theo:canonical-divisor}
The canonical divisor of a toric variety $X_\Sigma$ is the torus invariant Weil divisor
$$
K_{X_\Sigma} = - \sum_{\rho \in \Sigma(1)} D_\rho .
$$
\end{theorem}

By \cite[Proposition 4.1.2]{CLS}, for any $m \in M$, the character $\chi^m$ is a rational
function on $X_\Sigma$, and its divisor is given by
\begin{equation}\label{eq:divisor-of-character}
\mathrm{div}(\chi^m) =\sum_{\rho \in \Sigma(1)}{\< m, u_{\rho} \>} D_{\rho},
\end{equation}
so $\mathrm{div}(\chi^m)$ defines an invariant principal divisor of $X_\Sigma$.
A normal toric variety $X_\Sigma$ has a {\it torus factor} if and only if the set
$\{u_{\rho} : \rho \in \Sigma(1) \}$ do not span $N_{\R}$.
If $X_\Sigma$ has no torus factor, then by \cite[Theorem 4.1.3]{CLS} we have
the exact sequence
\begin{equation}
0 \imp M \imp \WDiv_{T}(X_\Sigma) \imp \Cl(X_\Sigma) \imp 0
\end{equation}
where the map $M \rightarrow \WDiv_{T}(X_\Sigma)$ is given by Equation
(\ref{eq:divisor-of-character}). Therefore,

\begin{cor}\label{cor:number-rays}
If $X_\Sigma$ has no torus factor, then $|\Sigma(1)| = \dim(X_\Sigma) + \rk \Cl(X)$.
\end{cor}

We recall that a {\it lattice polytope} $\Conv(S)$ in $M_\R$ is the convex hull of a
finite set $S \subseteq M$. A Cartier divisor
$D = \sum_{\rho \in \Sigma(1)}{a_\rho D_\rho}$ on a complete toric variety $X_\Sigma$
gives the lattice polytope
$$
P_D = \{ m \in M_\R : \<m, u_\rho \> \geq - a_\rho \} \subseteq M_\R ~.
$$
If $P$ is a full dimensional lattice polytope in $M_\R$ given by
\begin{equation}\label{eq:polytope-form}
P = \{m \in M_\R : \<m, u_F \> \geq - a_F ~\text{for all facets $F$ of $P$} \}
\end{equation}
where $u_F \in N$ is an {\it inward-pointing normal} of the facet $F$ and $a_F \in \Z$,
we define the fan $\Sigma_P$ of $P$ by
$$
\Sigma_P = \{ \Cone(u_F : \text{$F$ contains $Q$}): \text{$Q$ is a face of $P$} \} ~.
$$
For any facet $F$ of $P$, we denote by $D_F$ the invariant divisor of the toric variety
$X_{\Sigma_P}$ corresponding to the ray $\Cone(u_F)$ and we set
$$
D_P = \sum_{F \preceq P} a_F D_F ~.
$$
So we have:

\begin{theorem}[{\cite[Theorem 6.2.1]{CLS}}]
\label{theo:polytopes-toric-varieties}
Let $X$ be a toric variety given by a complete fan $\Sigma$. Then, the map
$$
\begin{array}{ccc}
\left\lbrace
\begin{tabular}{@{}c@{}}
Torus invariant ample \\ divisor on $X$
\end{tabular}
\right\rbrace
&
\longrightarrow
&
\left\lbrace
\begin{tabular}{@{}c@{}}
Full dimensional lattice polytope\\ $P$ in $M_\R$ such that $\Sigma_P = \Sigma$
\end{tabular}
\right\rbrace
\\
D & \longmapsto & P_D
\end{array}
$$
is a bijective correspondence.
\end{theorem}

Let $P$ be the polytope corresponding to an invariant ample divisor $D$ on $X_\Sigma$.
For each $\rho \in \Sigma(1)$ we denote by $P^{\rho}$ the facet of $P$ corresponding to
the ray $\rho \in \Sigma(1)$.
We recall that a lattice $\mathrm{M}$ defines a measure $\nu$ on $\mathrm{M}_{\R}$ as the
pullback of the Haar measure on $\mathrm{M}_{\R}/ \mathrm{M}$. It is determined by the
properties
\begin{enumerate}
\item[i.] $\nu$ is translation invariant,
\item[ii.] $\nu( \mathrm{M}_{\R}/ \mathrm{M} ) = 1$.
\end{enumerate}
For all $\rho \in \Sigma(1)$, we denote by $\vol(P^{\rho})$ the volume of $P^{\rho}$
with respect to the measure determined by the affine span of $P^{\rho} \cap M$.

\begin{prop}[{\cite[Section 11]{Dan78}}]
\label{prop:degree-invariant-divisor}
Let $(X_\Sigma, D)$ be a polarized toric variety corresponding to a lattice polytope $P$.
For all $\rho \in \Sigma(1)$, $\vol(P^{\rho}) = D_\rho \cdot D^{n-1}$.
\end{prop}

\subsection{Smooth toric varieties of Picard rank two}
\label{sec:toricpicard-two}
Let $X$ be a smooth toric variety of dimension $n$ with fan $\Sigma$ in $\R^n$ such that
$\rk \Pic(X) = 2$. By \cite[Theorem 7.3.7]{CLS} due to Kleinschmidt \cite{Kle88},
there are $r, s \in \N^{\ast}$ with $r + s = n$ and $a_1, \ldots, a_r \in \N$ with
$a_1 \leq a_2 \leq \ldots \leq a_r$ such that
\begin{equation}\label{eq:toricpicard-two}
X = \bP \left( \cO_{\bP^s} \oplus \bigoplus_{i=1}^{r} \cO_{\bP^s}(a_i) \right).
\end{equation}
We denote by $\pi : X \rightarrow \bP^s$ the projection to the base $\bP^s$.
By \cite[Section 7.3]{CLS}, the rays of $\Sigma$ are given by the half-lines generated by
$w_0, w_1, \ldots, w_s$, $v_0, v_1, \ldots, v_r$ where
$(w_1, \ldots, w_s)$ is the standard basis of $\Z^s \times 0_{\Z^r}$,
$(v_1, \ldots, v_r)$ the standard basis of $0_{\Z^s} \times \Z^r$,
$$
v_0 = -(v_1 + \ldots + v_r) \quad \text{and} \quad
w_0 = a_1 v_1 + \ldots + a_r v_r -(w_1 + \ldots + w_s).
$$
We denote by $D_{v_i}$ the divisor corresponding to the ray $\Cone(v_i)$ and $D_{w_j}$
the divisor corresponding to the ray $\Cone(w_j)$.
We have the following linear equivalence,
\begin{equation}\label{eq:divisor-toricpicard-two}
\left\lbrace
\begin{array}{ll}
D_{v_i} \sim_\lin D_{v_0} - a_i D_{w_0} & \text{for} ~ i \in \{1, \ldots, r\} \\
D_{w_j} \sim_\lin D_{w_0} & \text{for} ~ j \in \{1, \ldots, s \}
\end{array}
\right. .
\end{equation}
By (\ref{eq:divisor-toricpicard-two}), we deduce that $\Pic(X)$ is generated by
$D_{v_0}$ and $D_{w_0}$.

\begin{prop}[{\cite[Proposition 4.2.1]{DDK20}}]
Let $D = \alpha D_{w_0} + \beta D_{v_0}$ be an invariant divisor of $X$ with
$\alpha, \beta \in \Z$. Then, $D$ is ample if and only if $\alpha >0$ and $\beta >0$.
\end{prop}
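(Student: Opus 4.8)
The plan is to determine the ample cone of $X$ by computing its Mori cone (cone of effective curves) $\overline{\mathrm{NE}}(X)$ and then invoking the toric form of Kleiman's criterion: since $X$ is a smooth complete toric variety, a divisor is ample if and only if it pairs strictly positively with every generator of an extremal ray of $\overline{\mathrm{NE}}(X)$ (and nef if and only if all these pairings are $\geq 0$); see \cite[\S 6.3]{CLS}. Because $\rk \Pic(X) = 2$, the space $N_1(X)_{\R}$ of curve classes is two-dimensional, so $\overline{\mathrm{NE}}(X)$ has exactly two extremal rays. It therefore suffices to exhibit two independent curve classes generating those rays and to test $D = \mu\, D_{w_0} + \lambda\, D_{v_0}$ against each.

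First I would identify the two extremal rays through Batyrev's description of $\overline{\mathrm{NE}}(X)$ by primitive collections \cite[\S 6.4]{CLS}. The fan of $X$ has precisely two primitive collections, the fiber rays $P_2 = \{v_0, v_1, \ldots, v_r\}$ and the base rays $P_1 = \{w_0, w_1, \ldots, w_s\}$, with primitive relations
$$ v_0 + v_1 + \cdots + v_r = 0 \qquad \text{and} \qquad w_0 + w_1 + \cdots + w_s = \sum_{i=1}^{r} a_i\, v_i . $$
To each $P_{\ell}$ Batyrev attaches an extremal curve class $\gamma_{\ell}$, and on a smooth complete toric variety the intersection numbers are read off from the relation: $D_{\rho} \cdot \gamma_{\ell} = 1$ for $\rho \in P_{\ell}$, $D_{\rho} \cdot \gamma_{\ell} = -c_{\rho}$ when $u_{\rho}$ occurs with coefficient $c_{\rho}$ on the right-hand side of the relation, and $D_{\rho} \cdot \gamma_{\ell} = 0$ otherwise. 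Since $w_0$ does not occur in the relation for $P_2$ and $v_0$ does not occur in the relation for $P_1$, these pairings reduce, for the two rays we care about, to membership indicators: $D_{w_0} \cdot \gamma_2 = 0$, $D_{v_0} \cdot \gamma_2 = 1$, $D_{w_0} \cdot \gamma_1 = 1$ and $D_{v_0} \cdot \gamma_1 = 0$.

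As $X$ is smooth, $\Pic(X)$ is free and $\{D_{w_0}, D_{v_0}\}$ is a basis of it (this is exactly the generation statement following (\ref{equiv-lin})), so by linearity $D \cdot \gamma_1 = \mu$ and $D \cdot \gamma_2 = \lambda$. Because $\gamma_1$ and $\gamma_2$ generate the two extremal rays of $\overline{\mathrm{NE}}(X)$, Kleiman's criterion yields that $D$ is ample if and only if $\mu > 0$ and $\lambda > 0$, which is the assertion. I expect the only real work to be bookkeeping: checking that $P_1$ and $P_2$ are indeed the sole primitive collections of the Kleinschmidt fan and that the minimal cone containing $\sum_j w_j$ is spanned by the $v_i$ with $a_i > 0$ (so the coefficients $c_{v_i} = a_i$ enter correctly). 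Fortunately the pairings producing $\mu$ and $\lambda$ involve only $w_0$ and $v_0$, neither of which appears on the right-hand side of the relevant relation, so the final equivalence does not even depend on the $a_i$, and once the two curve classes and their pairings with $D_{w_0}, D_{v_0}$ are pinned down the conclusion is immediate.
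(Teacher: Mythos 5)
Your argument is correct and complete: the Kleinschmidt fan has exactly the two primitive collections $\{v_0,\dots,v_r\}$ and $\{w_0,\dots,w_s\}$, the associated primitive relations give extremal curve classes $\gamma_1,\gamma_2$ with $D_{w_0}\cdot\gamma_1=1$, $D_{v_0}\cdot\gamma_1=0$, $D_{w_0}\cdot\gamma_2=0$, $D_{v_0}\cdot\gamma_2=1$, and the toric Kleiman criterion then gives exactly $\mu>0$ and $\lambda>0$. Note that the paper itself offers no proof here — the proposition is quoted verbatim from \cite[Proposition 4.2.1]{DDK20} — so there is nothing to compare against; your Mori-cone computation is the standard route to this description of the ample cone and matches what that reference does.
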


By Theorem \ref{theo:canonical-divisor}, the anti-canonical divisor of $X$ is given by
\begin{equation}\label{eq:canonical-toricpicard-two}
- K_X = \sum_{i=0}^{r}{D_{v_i}} + \sum_{j=0}^{s}{D_{w_j}} \sim_\lin
(s+1 - a_1 - \ldots - a_r) D_{w_0} + (r+1) D_{v_0}.
\end{equation}
Thus, $X$ is a Fano variety if and only if $a_1 + \ldots + a_r \leq s$.

\begin{rem}
For $\alpha, \beta \in \N^\ast$ and $L = \cO_{X}(\alpha D_{w_0} + \beta D_{v_0})$, we have
an isomorphism $L \cong \pi^{\ast} \cO_{\bP^s}(\alpha) \otimes \cO_{X}(\beta)$.
\end{rem}

Let $L= \pi^{\ast} \cO_{\bP^s}(\nu) \otimes \cO_{X}(1)$ be an ample $\Q$-divisor of $X$
with $\nu \in \Q_{>0}$. For $k \in \{1, \ldots, s\}$, we set
$\Delta_k = \Conv(0, w_1, \ldots, w_k)$.
By \cite[Section 4]{HNS19}, the polytope corresponding to the $\Q$-polarized toric
variety $(X, L)$ is given by
$$
P = \Conv \left( \nu \Delta_s \times \{0 \} \cup (a_1 + \nu) \Delta_s \times \{v_1 \}
\cup \ldots \cup (a_r + \nu) \Delta_s \times \{v_r \} \right) .
$$
We denote by $P^{v_i}$ (resp. $P^{w_j}$) the facet of $P$ corresponding to the ray
$\Cone(v_i)$ (resp. $\Cone(w_j)$). The facet $P^{v_i}$ is the convex hull of
\begin{align*}
\nu \Delta_s \times \{0 \} \cup  \ldots
& \cup (a_{i-1} + \nu) \Delta_s \times \{v_{i-1} \} \\
& \cup (a_{i+1} + \nu) \Delta_s \times
\{v_{i+1} \} \cup \ldots \cup (a_r + \nu) \Delta_s \times \{v_r\}
\end{align*}
and $P^{w_i}$ is isomorphic to
$$
\nu \Delta_{s-1} \times \{0 \} \cup (a_1 + \nu) \Delta_{s-1} \times \{v_1 \}
\cup \ldots \cup (a_r + \nu) \Delta_{s-1} \times \{v_r \} ~.
$$
By \cite[Proposition 4.3]{HNS19}, for any $j \in \{0, \ldots, s \}$,
$$
\vol(P^{w_j}) = \sum_{k=0}^{s-1}{ \dbinom{s+r-1}{k} \left(
\sum_{d_1 + \ldots + d_r =s-k-1}{ a_{1}^{d_1} \cdots a_{r}^{d_r} } \right) \nu^k }
$$
and
$$
\vol(P^{v_0}) = \sum_{k=0}^{s}{ \dbinom{s+r-1}{k} \left( \sum_{d_1 + \ldots + d_r =s-k}{
a_{1}^{d_1} \cdots a_{r}^{d_r} } \right) \nu^k } ~.
$$
If $i \in \{1, \ldots, r \}$, we have
$$
\vol(P^{v_i}) = \sum_{k=0}^{s}{ \dbinom{s+r-1}{k} \left(
\sum_{\substack{d_1 + \ldots + d_{i-1} \\ + d_{i+1} + \ldots + d_r=s-k}}{
a_{1}^{d_1} \cdots a_{i-1}^{d_{i-1}} a_{i+1}^{d_{i+1}} \cdots a_{r}^{d_r} }
\right) \nu^k } .
$$
All these formulas will be used from Section \ref{sec:stability-logtangent-toricpic2}
when we study the stability of logarithmic tangent sheaves on toric varieties of Picard
rank two.

\subsection{Equivariant reflexive sheaves and families of filtrations}
\label{sec:equiv-sheaves}
Let $X$ be a toric variety associated to a fan $\Sigma$ in $N_\R$. Recall that a
reflexive sheaf on $X$ is a coherent sheaf $\cE$ that is canonically isomorphic to its
double dual $\cE^{\vee\vee}$.

Let $\theta : T \times X \rightarrow X$ be the action of $T$ on $X$,
$\mu : T \times T \rightarrow T$ the group multiplication,
$p_2 : T \times X \rightarrow X$ the projection onto the second factor and
$p_{23} : T \times T \times X \rightarrow T \times X$ the projection onto the second and
the third factor.
We call a sheaf $\cE$ on $X$ {\it equivariant} if there exists an isomorphism
$\Phi : \theta^{\ast} \cE \rightarrow p_{2}^{\ast} \cE$ such that
\begin{equation}\label{eq:cocycle-equiv-sheaf}
(\mu \times \Id_X)^{\ast} \Phi = p_{23}^{\ast} \Phi \circ
(\Id_T \times \theta)^{\ast} \Phi .
\end{equation}

Klyachko gave a description of torus equivariant reflexive sheaves over toric varieties
in terms of combinatorial data \cite{Kly90}:

\begin{defn}
\label{def:family of filtrations}
A family of filtrations $\E$ is the data of a finite dimensional vector space $E$ and for
each ray $\rho\in\Sigma(1)$, an increasing filtration $(E^\rho(i))_{i\in\Z}$ of $E$ such
that $E^\rho(i)=\lbrace 0 \rbrace$ for $i\ll 0$ and $E^\rho(i)=E$ for some $i$.
\end{defn}

\begin{rem}
Note that we are using increasing filtrations here, as in \cite{Per04}, rather than
decreasing as in \cite{Kly90}.
\end{rem}

To a family of filtrations
$\E:= \left( E, \{ E^{\rho}(j) \}_{\rho \in \Sigma(1), \, j \in \Z} \right)$,
we can assign an equivariant reflexive sheaf $\cE:=\fK(\E)$ defined by
\begin{equation}
\label{eq:sheaf from family of filtrations}
\Gamma(U_{\sigma}, \cE):=\bigoplus_{m\in M} \bigcap_{\rho\in\sigma(1)}
E^\rho(\langle m,u_\rho\rangle)\otimes \chi^m
\end{equation}
for all positive dimensional cones $\sigma\in\Sigma$, while
$\Gamma(U_{\lbrace 0\rbrace},\cE)=E\otimes \mathbb{C}[M]$. The morphisms between families
of filtrations are linear maps preserving the filtrations.
Then, by \cite[Theorem 5.19]{Per04}, the functor $\fK$ induces an equivalence of
categories between the families of filtrations and equivariant reflexive sheaves over $X$.

\begin{notation}\label{nota:decomposition-of-epuiv-sheaf}
Let $\cE$ be an equivariant reflexive sheaf given by the family of filtrations
$\left( E, \{ E^{\rho}(j) \}_{\rho \in \Sigma(1), \, j \in \Z} \right)$.
For any $\rho \in \Sigma$, we denote the space $\Gamma(U_\rho,\cE)$ by $E^\rho$ and
we write
$$E^\rho = \bigoplus_{m \in M}{E_{m}^\rho \otimes \chi^m}$$
where for any $m \in M$, $E_{m}^{\rho} := E^{\rho}(\< m, u_\rho \>)$.
\end{notation}

\begin{example}[Tangent sheaf {\cite[Corollary 2.2.17]{DDK20}}]
\label{examp:filtration-tangent}
The family of filtrations of the tangent sheaf $\cT_X$ of $X$ is given by
$$
E^{\rho}(j) = \left\lbrace
\begin{array}{ll}
0 & \text{if}~ j< -1 \\
\Span(u_{\rho}) & \text{if}~ j = -1 \\
N \otimes_{\Z} \C & \text{if} ~ j> -1
\end{array}
\right..
$$
\end{example}

\subsection{Some stability notions}
We denote by $\Amp(X) \subset N^1(X) \otimes_{\Z} \R$ the {\it ample cone} of $X$.
Let $\cE$ be a torsion-free coherent sheaf on $X$. The {\it degree} of $\cE$ with respect
to an ample class $L \in \Amp(X)$ is the real number obtained by intersection
$$
\deg_{L}(\cE)= c_1(\cE) \cdot L^{n-1}
$$
and its {\it slope} with respect to $L$ is given by
$$
\mu_{L}(\cE) = \dfrac{ \deg_{L}(\cE) }{\rk(\cE) }.
$$

\begin{defn}
A torsion-free coherent sheaf $\cE$ is said to be {\it slope semistable}
(or {\it semistable} for short) with respect to $L \in \Amp(X)$ if for any proper coherent
subsheaf of lower rank $\cF$ of $\cE$, one has
$$
\mu_{L}(\cF) \leq \mu_{L}(\cE) .
$$
When strict inequality always holds, we say that $\cE$ is {\it stable}. Finally, $\cE$ is
said to be {\it polystable} if it is the direct sum of stable subsheaves of the same slope.
\end{defn}

\begin{prop}[{\cite[Claim 2 of Proposition 4.13]{Koo11}}]
\label{prop:poly-vs-semistability}
A reflexive polystable sheaf on $X$ is a semistable sheaf on $X$ isomorphic to a
(finite, nontrivial) direct sum of reflexive stable sheaves.
Let $\cE$ be a semistable reflexive sheaf on $X$, then $\cE$ contains a unique maximal
reflexive polystable subsheaf of the same slope as $\cE$.
\end{prop}

If $\cE$ is an equivariant reflexive sheaf on a normal toric variety $X$ given by the
family of filtrations $\left( E, \{ E^{\rho}(j) \} \right)$, according to
\cite[Proposition 4.13]{Koo11}, it is enough to test slope inequalities for equivariant
and reflexive saturated subsheaves. By \cite[Proposition 2.3]{HNS19}, if $\cF$ is an
equivariant reflexive subsheaf of $\cE$ given by the family of filtrations
$(F, \{ F^\rho(i) \})$ with $F$ a vector subspace of $E$ and
$F^{\rho}(i) \subseteq E^{\rho}(i)$, then $\cF$ is saturated in $\cE$ if and only if for
all $\rho \in \Sigma(1), i \in \Z$, $F^\rho(i) = E^\rho(i) \cap F.$

\begin{notation}\label{nota:saturated-subsheaf}
Let $F$ be a vector subspace of $E$. We denote by $\cE_F$ the saturated subsheaf of
$\cE$ defined by the family of filtrations $\left( F, \{ F^{\rho}(j) \} \right)$
where $F^{\rho}(j)= F \cap E^{\rho}(j).$
\end{notation}

By \cite[Corollary 3.18]{Koo11}, the first Chern class of $\cE$ is given by
\begin{equation}\label{eq:c1-sheaf}
c_1(\cE) = - \sum_{\rho \in \Sigma(1)} e^{\rho}(\cE) \, D_{\rho}
\quad \text{where} \quad
e^{\rho}(\cE) = \sum_{i \in \Z}{i \, e^{\rho}(i)}
\end{equation}
with $e^{\rho}(i) = \dim E^{\rho}(i) - \dim E^{\rho}(i-1) \,$. Therefore, for any
$L \in \Amp(X)$,
\begin{equation}\label{eq:slope-sheaf}
\mu_{L}(\cE) = - \dfrac{1}{\rk(\cE)} \sum_{\rho \in \Sigma(1)}{ e^{\rho}(\cE)
\deg_{L}(D_{\rho})} ~.
\end{equation}
For a reflexive sheaf $\cE$ on $X$, we set
\begin{align*}
\Stab(\cE) & = \{L \in \Amp(X): \cE ~\text{is stable with respect to $L$} \}
\quad \text{and} \\
\sStab(\cE) & = \{L \in \Amp(X): \cE ~\text{is semistable with respect to $L$} \} ~.
\end{align*}

\section{Description of equivariant logarithmic tangent sheaves}
\label{sec:log-sheaves}

\subsection{Logarithmic tangent sheaves}\label{sec:logarithmic-sheaves}
We recall here the definition of the logarithmic tangent sheaf of a pair $(X, D)$
where $X$ is a normal projective variety of dimension $n$ and $D$ a reduced Weil divisor
on $X$.

\begin{defn}
We say that a pair $(X, D)$ is {\it log-smooth} if $X$ is smooth and $D$ is a reduced snc
divisor. We denote by $(X, D)_\reg$ the snc locus of the pair $(X,D)$, that is, the locus
of points $x \in X$ where $(X,D)$ is log-smooth in a neighborhood of $x$.
\end{defn}

If the pair $(X,D)$ is log-smooth, we define the logarithmic tangent bundle\linebreak
$T_{X}(-\log D)$ as the dual of the bundle of logarithmic differential form
$\Omega_{X}^{1}(\log D)$ where $\Omega_{X}^{1}(\log D)$ is defined in \cite[\S 1]{Ita76}.
By \cite[Definition 4]{Kaw78} and \cite[\S 1]{Sai75}, we can see the space of sections of
$T_{X}(- \log D)$ as the set of vector fields on $X$ which vanish along $D$. If $D$ is
locally given by $(z_1 \cdots z_k = 0)$, then $T_{X}(- \log D)$ as a sheaf is the locally
free $\cO_X$-module generated by
$$
z_1 \dfrac{\partial}{\partial z_1} , \ldots, z_k \dfrac{\partial}{\partial z_k} ,
\dfrac{\partial}{\partial z_{k+1}}, \ldots, \dfrac{\partial}{\partial z_n}.
$$

\begin{defn}[{\cite[Definition 3.4]{Gue16}}]
Let $(X,D)$ be a log pair and $X_0 = (X, D)_\reg$. The logarithmic tangent sheaf
$\cT_{X}(- \log D)$ of $(X, D)$ is defined as $j_{\ast} T_{X_0}(- \log D_{| X_0})$ where
$j : X_0 \rightarrow X$ is the open immersion.
\end{defn}

The sheaf $\cT_{X}(- \log D)$ (as well as its dual) is coherent; by
\cite[Proposition 1.6]{Har80}, this sheaf is reflexive.
We now consider the case where $X$ is a toric variety with torus $T$.
Let $\Sigma$ be the fan of $X$ and $X_0$ the toric variety corresponding to the fan
$\Sigma^1 = \Sigma(0) \cup \Sigma(1)$. We denote by $j : X_0 \rightarrow X$ the open
immersion.

\begin{prop}\label{prop:equivariant-log-sheaves}
Let $D$ be a reduced Weil divisor on $X$. The sheaf $\Omega_{X}^{1}(\log D)$
is equivariant if and only if $D$ is an invariant divisor under the torus action.
\end{prop}

\begin{proof}
We assume that $D$ is an invariant divisor under the torus action.
Let $D_0$ be the restriction of $D$ on $X_0$. For $t \in T$, let $\phi_t : X \rightarrow X$
be the map defined by $\phi_t(x) = t \cdot x$. We set $\Phi_t = ( d \phi_t )^{-1}$
where $d \phi_t$ is the differential of $\phi_t$.
If $\cE = T_{X_0}$, we get the following diagram.
\begin{equation}\label{eq:cocycle-equiv-sheaf2}
\begin{tikzcd}
(\phi_{t' \cdot t})^{\ast} \cE \arrow[rr, "\Phi_{t' \cdot t}"]
\arrow[rd, "\phi_{t}^{\ast} \Phi_{t'}"'] & & \cE \\
& \phi_{t}^{\ast} \cE \arrow[ru, "\Phi_t"'] &
\end{tikzcd}
\end{equation}
If $\cE = T_{X_0}(-\log D_0)$, the diagram (\ref{eq:cocycle-equiv-sheaf2}) remains true;
thus, $T_{X_0}(- \log D_0)$ is equivariant.
Therefore $\Omega_{X_0}^{1}(\log D_0)$ is equivariant. As
\begin{equation}\label{eq:log-cotangent}
\Omega_{X}^{1}(\log D) \cong j_{\ast} \Omega_{X_0}^{1}(\log D_0)~,
\end{equation}
we deduce that $\Omega_{X}^{1}( \log D)$ is equivariant.

We now assume that $\Omega_{X}^{1}(\log D)$ is equivariant. We write
$D = \sum_{k=1}^{s}{D_k}$ where the $D_k$ are irreducible Weil divisors of $X$.
\\
{\it First case.}
We assume that $X$ is smooth. By \cite[Properties 2.3]{EV92} we have an exact sequence
$$
0 \imp \Omega_{X}^1 \imp \Omega_{X}^{1}\left( \log D \right) \imp
\bigoplus_{k=1}^{s} \cO_{D_k} \imp 0
$$
where $\cO_{D_k}$ is viewing as a sheaf on $X$ via extension by zero.
The first part of the proof is to show that : for any $t \in T$, $t \cdot Z = Z$ where
$Z = X \setminus D$. Let $x \in Z$ and assume that there is $t \in T$ such that
$y  = t \cdot x \in D$. We have two exact sequences
\begin{align*}
0 \imp \Omega_{X, \, x}^1 \imp \Omega_{X}^{1}\left( \log D \right)_x \imp
\bigoplus_{k=1}^{s} \cO_{D_k, x} \imp 0
\\
0 \imp \Omega_{X, \, y}^1 \imp \Omega_{X}^{1}\left( \log D \right)_y \imp
\bigoplus_{k=1}^{s} \cO_{D_k, y} \imp 0
\end{align*}
As $\Omega_{X}^{1}$ and $\Omega_{X}^{1}(\log D)$ are equivariant, we have an isomorphism
$$
\bigoplus_{k=1}^{s} \cO_{D_k, x} \cong \bigoplus_{k=1}^{s} \cO_{D_k, y} ~;
$$
this is absurd. Therefore, for any $t \in T$, we have $t \cdot Z \subseteq Z$, that is
$t \cdot Z = Z$. As $\Omega_{X}^{1}(\log D)$ is equivariant, by using the fact that
$D = X \setminus Z$, for any $t \in T$, we have $t \cdot D = D \,$; thus, $D$ is a
$T$-invariant divisor.
\\
{\it Second case.}
We assume that $X$ is a normal variety. By (\ref{eq:log-cotangent}),
as $\Omega_{X}^{1}(\log D)$ is equivariant, we also have the same property for
$\Omega_{X_0}^{1}(\log D_0)$. By the first case, $D_0$ is an invariant divisor under the
action of $T$ on $X_0$. As $\codim(X \setminus X_0) \geq 2$, we deduce that $D$ is the
Zariski closure of $D_0$ on $X$. Thus, $D$ is an invariant divisor under the action of
$T$ on $X$.
\end{proof}

\subsection{Families of filtrations of logarithmic tangent sheaves}
\label{sec:filtration-logtangent}
We give here the proof of Theorem \ref{theo:filt-logtangent-intro}.
Let $X$ be a toric variety of dimension $n$ associated to the fan $\Sigma$ and $D$ a
reduced Weil divisor of $X$. According to Proposition \ref{prop:equivariant-log-sheaves},
$\cT_{X}(- \log D)$ is equivariant if and only if
$$
D = \sum_{\rho \in \Delta}{ D_{\rho}}
$$
where $\Delta \subseteq \Sigma(1)$.
In that case, $\cE$ is given by a family of filtrations.

\begin{rem}\label{rem:action-to-ring}
If $G$ is an algebraic group acting on the affine toric variety $Y= \Spec(R)$, we define
an action of $G$ on $R$ by setting : for any $g \in G$ and $\varphi \in R$,
$g \cdot \varphi = \left( \phi_{g^{-1}} \right)^{\ast} \varphi$
where $\phi_g(x) = g \cdot x$.
\end{rem}

\begin{proof}[\bf Proof of Theorem \ref{theo:filt-logtangent-intro}]
For $\rho \in \Sigma(1)$, we set
$$E^{\rho} = \Gamma(U_{\rho}, \cT_{X}(- \log D)) .$$
By the orbit-cone correspondence (cf. Theorem \ref{theo:orbit-cone}),
if $\rho \in \Delta$, we have $U_{\rho} \cap D = U_{\rho} \cap D_{\rho}$ and for
$\rho \notin \Delta$, $U_{\rho} \cap D = \nothing$. We can reduce the problem to the case
where $\Delta$ contains one ray.
For the rest of the proof, we assume that $\Delta = \{ \rho_0 \}$.
Let $\rho \in \Sigma(1)$ and $(u_1, \ldots, u_n)$ a basis of $N$ such that
$u_1 = u_{\rho}$. We denote by $(e_1, \ldots, e_n)$ the dual basis of
$(u_1, \ldots, u_n)$ and we set $x_i = \chi^{e_i}$. We have
$\C[S_{\rho}] = \C[x_1, x_{2}^{\pm 1}, \ldots, x_{n}^{\pm 1}]$.
\\
{\bf First case : We assume that $\rho = \rho_0$.}
As on $U_{\rho}$ the divisor $D$ is defined by the equation $x_{1}=0$, we have
$$
E^{\rho} = \left( \C[S_{\rho}] \cdot x_1 \dfrac{\partial}{\partial x_1} \right)
\oplus
\left( \bigoplus_{i=2}^{n} \C[S_{\rho}] \cdot \dfrac{\partial}{\partial x_i} \right) ~.
$$
We set
$$
L_{1}^{\rho} = \bigoplus_{m \in S_{\rho}}{
\C \cdot \chi^{m+e_1} \dfrac{\partial}{\partial x_1} }
\quad
\text{and for $i \in \{2, \ldots, n \}$},
\quad
L_{i}^{\rho} = \bigoplus_{m \in S_{\rho}}{
\C \cdot \chi^m \dfrac{\partial}{\partial x_i} } ~~.
$$
According to Remark \ref{rem:action-to-ring}, for any $t \in T$ and $m \in M$,
$t \cdot \chi^m = \chi^{-m}(t) \chi^m$. Hence, $t \cdot d x_i = \chi^{-e_i}(t) d x_i$ and
$t \cdot \dfrac{\partial}{\partial x_i} = \chi^{e_i}(t) \dfrac{\partial}{\partial x_i}$.
For $i \in \{1, \ldots, n \}$, we write
$$
L_{i}^{\rho} = \bigoplus_{m \in M}{ \left( L_{i}^{\rho} \right)_m} \quad
\text{where} \quad
\left( L_{i}^{\rho} \right)_m = \{ f \in L_{i}^{\rho} :
t \cdot f = \chi^{-m}(t) f \} ~.
$$
We have
$$
\left( L_{1}^{\rho} \right)_m = \left\lbrace
\begin{array}{ll}
\C \cdot \chi^{m + e_1} \, \dfrac{\partial}{\partial x_1} &
\text{if}~ 0 \preceq_{\rho} m
\\ 0 & \text{otherwise}
\end{array}
\right.
$$
and for $i \in \{2, \ldots, \, n \}$,
$$
\left( L_{i}^{\rho} \right)_m = \left\lbrace
\begin{array}{ll}
\C \cdot \chi^{m + e_i} \, \dfrac{\partial}{\partial x_i} &
\text{if}~  -e_i \preceq_{\rho} m
\\ 0 & \text{otherwise}
\end{array}
\right. ~.
$$
As the torus $T$ is a Lie group, the tangent space of $T$ at the identity element
generated by $\left(\frac{\partial}{\partial x_i} \right)_{1 \leq i \leq n}$ is
isomorphic to $N_{\C}$. Thus, for all $i \in \{1, \ldots, n \}$, we can identify
$\frac{\partial}{\partial x_i}$ with $u_i$.
\\
For $i \in \{1, \ldots, n \}$, we set $\bL_{i}^{\rho} = \Span(u_i)$. Let $m \in M$.
\begin{itemize}
\item
If $i = 1$ and $0 \preceq_{\rho} m$, then $\left( L_{i}^{\rho} \right)_m$
is isomorphic to $\bL_{1}^{\rho} \otimes \chi^m$.
\item
If $i \geq 2$ and $-e_i \preceq_{\rho} m$, then $\left( L_{i}^{\rho} \right)_m$
is isomorphic to $\bL_{i}^{\rho} \otimes \chi^m$.
\end{itemize}
We set $j = \langle m, u_1 \rangle$. The condition $0 \preceq_{\rho} m$
is equivalent to $j \geq 0$ and for $i \in \{2, \ldots, \, n \}$, $-e_i \preceq_{\rho} m$
is equivalent to $j \geq 0$. Thus, for any $i \in \{1, \ldots, n \}$, we set
$$
L_{i}^{\rho}(j) = \left\lbrace
\begin{array}{ll}
0 & \text{if} ~ j \leq -1 \\ \bL_{i}^{\rho} & \text{if} ~ j \geq 0
\end{array}
\right. .
$$
By construction, $\{L_{i}^{\rho}(j)\}$ is the family of filtrations of $L_{i}^{\rho}$.
As
$$
E^{\rho} = \bigoplus_{m \in M}{E^{\rho}(\langle m, u_1 \rangle) \otimes \chi^m}
$$
where
$E^{\rho}(\< m, u_1 \>) \cong \bigoplus_{i=1}^{n}{ L_{i}^{\rho}(\< m, u_{\rho} \>) }$,
we get
$$
E^{\rho}(j) \cong \left\lbrace
\begin{array}{ll}
0 & \text{if}~ j \leq -1 \\ N_{\C} & \text{if}~ j \geq 0
\end{array}
\right. ~.$$
{\bf Second case : We assume that $\rho \neq \rho_0$.}
As $U_{\rho} \cap D = \nothing$, we have
$$
E^{\rho} = \bigoplus_{i=1}^{n} \C[S_{\rho}] \cdot \dfrac{\partial}{\partial x_i}
=
\bigoplus_{i=1}^{n} \left( \bigoplus_{m \in S_{\rho}}{
\C \cdot \chi^m \, \dfrac{\partial}{\partial x_i} } \right) ~~.
$$
For all $i \in \{1, \ldots, n \} \,$, we set
$L_{i}^{\rho} = \C[S_{\rho}] \cdot \dfrac{\partial}{\partial x_i}$.
We have
$$
L_{i}^{\rho} = \bigoplus_{m \in M}{ \left( L_{i}^{\rho} \right)_m}
\quad \text{where} \quad
\left( L_{i}^{\rho} \right)_m = \left\lbrace
\begin{array}{ll}
\C \cdot \chi^{m + e_i} \, \dfrac{\partial}{\partial x_i} &
\text{if}~ -e_i \preceq_{\rho} m
\\ 0 & \text{oherwise}
\end{array}
\right.~.
$$
For $m \in M$, we set $j = \langle m, \, u_1 \rangle$.
The condition $- e_i \preceq_{\rho} m$ is equivalent to
$j \geq - \langle e_i, \, u_1 \rangle$. Thus, for any $i \in \{2, \ldots, n \}$, the
family of filtrations of $L_{i}^{\rho}$ is given by
$$
L_{i}^{\rho}(j) = \left\lbrace
\begin{array}{ll}
0 & \text{if} ~ j \leq -1 \\ \bL_{i}^{\rho} & \text{if} ~ j \geq 0
\end{array}
\right.
$$
and the family of filtrations of $L_{1}^{\rho}$ is given by
$$
L_{1}^{\rho}(j) = \left\lbrace
\begin{array}{ll}
0 & \text{if} ~ j \leq -2 \\ \bL_{i}^{\rho} & \text{if} ~ j \geq -1
\end{array}
\right. ~.
$$
As in the first case, we get
$$
E^{\rho}(j) \cong \left\lbrace
\begin{array}{ll}
0 & \text{if} ~ j \leq -2 \\
\Span(u_{\rho}) & \text{if} ~ j = -1 \\
N \otimes_{\Z} \C & \text{if} ~ j \geq 0
\end{array}
\right.$$
which ends the proof.
\end{proof}

The sheaf of regular sections of the trivial vector bundle $X \times \C \rightarrow X$ of
rank $1$ is $\cO_{X}$. We denote by
$\left(F, \, \{ F^{\rho}(j) \}_{\rho \in \Sigma(1), \, j \in \Z} \right)$
the family of filtration of $\cO_X$.
For $\rho \in \Sigma(1)$, we set $F^{\rho} = \cO_{X}(U_{\rho})$ and
$F_{m}^{\rho} = \{ f \in F^{\rho} : t \cdot f = \chi^{-m}(t) f \}$. As
$$
F^{\rho} = \bigoplus_{m \in M}{ F_{m}^{\rho}} = \C[S_{\rho}] =
\bigoplus_{m \in S_{\rho}}{\C \cdot \chi^m} ,
$$
we deduce that $F_{m}^{\rho} = \C \cdot \chi^m$ if $m \in S_{\rho}$ and
$F_{m}^{\rho} = 0$ if $m \notin S_{\rho}$. Hence,
$$
F^{\rho}(j) = \left\lbrace
\begin{array}{ll}
0 & \text{if}~ j \leq -1 \\ \C & \text{if}~ j \geq 0
\end{array}
\right. ~.
$$

\begin{cor}\label{cor:filtration-logtangent}
Let $\Delta \subseteq \Sigma(1)$ and $D = \sum_{\rho \in \Delta}{ D_{\rho}}$.
\begin{enumerate}
\item
If $\Delta = \nothing$, then $\cT_{X}(- \log D)$ is the tangent sheaf $\cT_{X}$.
\item
If $\Delta = \Sigma(1)$, then $\cT_{X}(- \log D)$ is isomorphic to the trivial sheaf
of rank $n$.
\end{enumerate}
\end{cor}

\begin{proof}
If $\Delta= \nothing$, the family of filtrations of $\cT_{X}(- \log D)$ is identical
to the family of filtrations given in Example \ref{examp:filtration-tangent}.
If $\Delta = \Sigma(1)$, for all $\rho \in \Sigma(1)$, we have
$$
E^{\rho}(j) = \left\lbrace
\begin{array}{ll}
0 & \text{if} ~~ j \leq -1 \\ N \otimes_{\Z} \C & \text{if} ~~ j \geq 0
\end{array}
\right. ~~.
$$
Hence, $\cT_{X}(- \log D)$ is isomorphic to the trivial sheaf of rank $n$.
\end{proof}

\begin{notation}\label{nota:saturated-logtangent}
Let $G$ be a vector subspace of $N_{\C}$. We denote by $\cE_G$ the subsheaf of
$\cE = \cT_{X}(- \log D)$ defined by the family of filtrations
$\left( E_G, \{ G^{\rho}(j) \}_{\rho \in \Sigma(1), \, j \in \Z} \right)$
where $E_G = G$ and $G^{\rho}(j) = E^{\rho}(j) \cap G$.
If $\rho \in \Delta$ or $u_{\rho} \notin G$, then
$$
G^{\rho}(j) = \left\lbrace
\begin{array}{ll}
0 & \text{if}~ j \leq -1 \\ G & \text{if}~ j \geq 0
\end{array}
\right. ~.
$$
If $\rho \notin \Delta$ and $u_{\rho} \in G$, then
$$
G^{\rho}(j) = \left\lbrace
\begin{array}{ll}
0 & \text{if}~ j \leq -2 \\
\Span(u_{\rho}) & \text{if}~ j = -1 \\
G & \text{if}~ j \geq 0
\end{array}
\right. ~.
$$
\end{notation}

\subsection{Decomposition of equivariant logarithmic tangent sheaves}
\label{sec:decomposition-of-logtangent}
In this part, we give some conditions on $\Sigma$ and $\Delta$ which ensure that the
logarithmic tangent sheaf is decomposable. We first recall the family of filtrations
of a direct sum of equivariant reflexive sheaves.

\begin{prop}[{\cite[Section 6.3]{IS15}}]
Let $\cF$ and $\cG$ be two equivariant reflexive sheaves with
$\left(F, \, \{ F^{\rho}(j) \}_{\rho \in \Sigma(1), \, j \in \Z} \right)$ and
$\left(G, \, \{ G^{\rho}(j) \}_{\rho \in \Sigma(1), \, j \in \Z} \right)$ for
family of filtrations. The family of filtrations of $\cF \oplus \cG$ is given by
\begin{equation}\label{eq:filtration-sumsheaves}
\left(F \oplus G, \, \{ (F \oplus G)^{\rho}(j) \}_{\rho \in \Sigma(1), \, j \in \Z}
\right)
\quad \text{where} \quad (F \oplus G)^{\rho}(j) = F^{\rho}(j) \oplus G^{\rho}(j) .
\end{equation}
\end{prop}

We assume that $X$ is a toric variety without torus factor. We denote by $p$ the rank of
the class group $\Cl(X)$ of $X$. By Corollary \ref{cor:number-rays}, we have
$\card(\Sigma(1)) = n + p$.

\begin{prop}\label{prop:logtangent-notsimple1}
Let $D= \sum_{\rho \in \Delta} D_\rho$ with $\card(\Delta) = p$. We set
$\Sigma(1) \setminus \Delta = \{ \rho_1, \ldots, \rho_n \}$ where $\rho_k = \Cone(u_k)$
and $u_k \in N$. If $N_{\R} = \Span(u_1, \ldots, u_n)$, then $\cE = \cT_{X}(- \log D)$
is decomposable and
$$
\cE = \bigoplus_{k=1}^{n}{ \cE_{F_k}}
$$
where $\cE_{F_k}$ is the subsheaf of $\cE$ corresponding to the vector space
$F_k = \Span(u_k)$.
\end{prop}

\begin{proof}
For all $k \in \{1, \ldots, n \}$, the family of filtrations
$\left( F_k, \, \{ F_{k}^{\rho}(j) \} \right)$
of $\cE_{F_k}$ is given by
$$
F_{k}^{\rho}(j) = \left\lbrace
\begin{array}{ll}
0 & \text{if}~ j \leq -1 \\ F_k & \text{if}~ j \geq 0
\end{array}
\right.~ \text{if}~ \rho \neq \Cone(u_k)
$$
and
$$
F_{k}^{\rho}(j) = \left\lbrace
\begin{array}{ll}
0 & \text{if}~ j \leq -2 \\
\Span(u_{\rho}) & \text{if}~ j = -1 \\
F_k & \text{if}~ j \geq 0
\end{array}
\right.~ \text{if}~ \rho = \Cone(u_k) ~.
$$
For all $\rho \in \Sigma(1)$ and $j \in \Z$, we have
$$
\bigoplus_{k =1}^{n}{ F_{k}^{\rho}(j) } = \left\lbrace
\begin{array}{ll}
0 & \text{if}~ j \leq -1 \\ N_{\C} & \text{if}~ j \geq 0
\end{array}
\right.
\quad \text{if}~ \rho \in \Delta
$$
and
$$
\bigoplus_{k =1}^{n}{ F_{k}^{\rho}(j) } = \left\lbrace
\begin{array}{ll}
0 & \text{if}~ j \leq -2 \\
\Span(u_{\rho}) & \text{if}~ j = -1 \\
N_{\C} & \text{if}~ j \geq 0
\end{array}
\right.
\quad \text{if}~ \rho \notin \Delta ~~.
$$
Hence, by (\ref{eq:filtration-sumsheaves}) and Theorem \ref{theo:filt-logtangent-intro}
we get $\cE = \bigoplus_{k =1}^{n}{ \cE_{F_k}}$.
\end{proof}

A similar proof gives the following result.

\begin{prop}\label{prop:logtangent-notsimple2}
We assume that $\Delta$ satisfies $1 + p \leq \card(\Delta) \leq n+p-1$. Then the sheaf
$\cE = \cT_{X}(- \log D)$ is decomposable and $\cE = \cE_{G} \oplus \cE_F$ where
$G = \Span(u_{\rho} : \rho \in \Sigma(1) \setminus \Delta)$ and $F$ a vector subspace
of $N_{\C}$ such that $N_{\C} = G \oplus F$.
\end{prop}

\subsection{An instability condition for logarithmic tangent sheaves}
\label{sec:instability-condition}

Let $\Delta \subseteq \Sigma(1)$ and $D = \sum_{\rho \in \Delta}{D_{\rho}}.$
Let $\left( E_G, \{ G^{\rho}(j) \}_{\rho \in \Sigma(1), \, j \in \Z} \right)$ be the
family of filtrations corresponding to the subsheaf $\cE_G$ of
$\cE = \cT_{X}(-\log D)$ where $G \subseteq N_\C$ is a vector subspace.
By Equation (\ref{eq:slope-sheaf}), if $L$ is a polarization of $X$, we have
\begin{equation}\label{eq:slope-logtangent}
\mu_{L}(\cE) = \dfrac{1}{n} \sum_{\rho \notin \Delta}{ \deg_{L}(D_{\rho})}
\end{equation}
and
\begin{equation}\label{eq:slope-logtangent2}
\mu_{L}(\cE_G) = \dfrac{1}{\dim G} \sum_{\rho \notin \Delta \; \text{and} \;
u_{\rho} \in G}{ \deg_{L}(D_{\rho})} ~.
\end{equation}
Therefore,
\begin{equation}\label{eq:stability-logtangent}
\mu_L(\cE) - \mu_{L}(\cE_G) = \left( \dfrac{1}{n} - \dfrac{1}{\dim G} \right)
\sum_{\substack{\rho \notin \Delta \\ u_{\rho} \in G}}{ \deg_{L}(D_{\rho})}
+ \dfrac{1}{n}
\sum_{\substack{\rho \notin \Delta \\ u_{\rho} \notin G}}{ \deg_{L}(D_{\rho})}.
\end{equation}
To study the stability of $\cE$ with respect to $L \in \Amp(X)$, it suffices to
compare $\mu_{L}(\cE)$ with $\mu_{L}(\cE_G)$ where
$G \subseteq \Span( u_{\rho} : \rho \notin \Delta)$ and $1 \leq \dim G \leq n-1 $.

\begin{prop}\label{prop:logtangent-unstability}
If $1 \leq \card(\Sigma(1) \setminus \Delta) \leq n-1$, then for any $L \in \Amp(X)$,
the logarithmic tangent sheaf $\cE = \cT_{X}(- \log D)$ is not semistable with
respect to $L$.
\end{prop}

\begin{proof}
We assume that $\Sigma(1) \setminus \Delta = \{\rho_1, \ldots, \, \rho_k \}$ where
$1 \leq k \leq n-1$ and we denote by $D_j$ the divisor corresponding to
$\rho_j = \Cone(u_j)$. For $G = \Span(u_1, \ldots, u_k)$, we have
$$
\mu_{L}(\cE) - \mu_{L}(\cE_G) = \left( \dfrac{1}{n} - \dfrac{1}{\dim G} \right)
\sum_{j=1}^{k}{\deg_{L}(D_j)} < 0 ~.
$$
Thus, $\cE$ is not semistable with respect to $L$.
\end{proof}

Hence, by Corollary \ref{cor:number-rays}, we get:

\begin{cor}\label{cor:logtangent-unstability}
Let $p = \rk \Cl(X)$. If $1 + p \leq \card(\Delta) \leq n+p-1$, then for any
$L \in \Amp(X)$, the logarithmic tangent sheaf $\cT_{X}(- \log D)$ is unstable
with respect to $L$.
\end{cor}

\begin{rem}
By Corollary \ref{cor:filtration-logtangent}, if $\card(\Delta) = n + p$,
$\cT_{X}(- \log D)$ is semistable with respect to any polarizations.
\end{rem}

From now on, we will study the (semi)stability of $\cT_{X}(- \log D)$ only on the case
where $ 1 \leq \card(\Delta) \leq p = \rk \Cl(X)$ and $p \in \{1, 2\}$.

\section{Stability of equivariant logarithmic tangent sheaves}
\label{sec:stability-logtangent}

\subsection{Stability on weighted projective spaces}
Let $q_0, q_1, \ldots, q_n \in \N^{\ast}$ such that $$\gcd(q_0, \ldots, q_n) = 1.$$
We set $N = \Z^{n+1} / \Z \cdot (q_0, \ldots, q_n).$ The dual lattice of $N$ is
$$
M = \{(a_0, \ldots, a_n) \in \Z^{n+1} : a_0 \, q_0 + \ldots + a_n \, q_n= 0 \}.
$$
We denote by $\{ u_i : 0 \leq i \leq n \}$ the images in $N$ of the standard basis
vectors in $\Z^{n+1}$. So the relation
$q_0 \, u_0 + q_1 \, u_1 + \ldots + q_n \, u_n = 0$
holds in $N$. The toric variety $X$ associated to the simplicial fan 
$\Sigma = \{ \Cone(A) : A \subsetneq \{u_0, \ldots, u_n \} \}$
is the weighted projective space $\bP(q_0, q_1, \ldots, q_n)$.
We denote by $D_i$ the divisor of $X$ corresponding to the ray $\Cone(u_i)$.
For $i \in \{0, \ldots, n \}$, we set $\cE = \cT_{X}(- \log D_i)$ and
$A_i = \{0, \ldots, n \} \setminus \{i \}$.

\begin{prop}
Let $L \in \Amp(X)$. The sheaf $\cE$ is polystable with respect to $L$ if and only if
there is $q \in \N^{\ast}$ such that for all $j \in A_i$, $q_j = q$.
\end{prop}

\begin{proof}
We first show that $q_i D_j \sim_\lin q_j D_i$. Let $m = (a_0, \ldots, a_n) \in M$
defined by $a_i = q_j$, $a_j = -q_i$ and $a_k = 0$ if
$k \in A_i \setminus \{ j \}$. By Equation (\ref{eq:divisor-of-character}), we get
$\mathrm{div}(\chi^m) = q_j D_i - q_i D_j$. Hence, $q_i D_j \sim_\lin q_j D_i$.
Therefore, for any $L \in \Amp(X)$, $q_i \deg_L(D_j) = q_j \deg_L(D_i)$.

The assumptions of Proposition \ref{prop:logtangent-notsimple1} are verified. Hence,
$\cE = \bigoplus_{j \in A_i}{ \cE_{F_j} }$ where $F_j = \Span(u_j)$.
By Equation (\ref{eq:slope-logtangent2}), we get
$$
\mu_{L}(\cE_{F_j}) = \deg_{L}(D_j) = \dfrac{q_j}{q_i} \deg_{L}(D_i).
$$
If $\cE$ is polystable with respect to $L$, there is $r \in \Q$ such that for all
$j \in A_i$, $q_j = r \, q_i$. Hence, we have the existence of $q \in \N^{\ast}$
such that for all $j \in A_i$, $q_j = q$. For the converse, if for all $j \in A_i$,
we have $q_j = q$, then $\cE$ is polystable.
\end{proof}

According to Proposition \ref{prop:poly-vs-semistability}, we get :

\begin{cor}\label{cor:stability-logtgt-weighted}
For all $i \in \{0, \ldots, n \}$, $\sStab(\cT_{X}(- \log D_i)) \neq \nothing$ if and
only if there is $q \in \N^{\ast}$ such that for all $j \in A_i$, $q_j = q$. Moreover,
if for all $j \in A_i$, $q_j = q$, then
$$
\nothing = \Stab(\cT_{X}(- \log D_i)) \subsetneq \sStab(\cT_{X}(- \log D_i)) = \Amp(X).
$$
\end{cor}

\subsection{Condition of stability on toric varieties of Picard rank two}
\label{sec:stability-logtangent-toricpic2}
In this part, we adapt some results of \cite[Section 4]{HNS19} for the study of the
stability of $T_{X}(- \log D)$ when
$X = \bP \left( \cO_{\bP^s} \oplus \bigoplus_{i=1}^{r} \cO_{\bP^s}(a_i) \right)$
with $0 \leq a_1 \leq \ldots \leq a_r$.
We use notation of Section \ref{sec:toricpicard-two}.
The following lemma will be useful in the proof of Proposition
\ref{prop:stability-logtangent-toricpic2} which is the main result of this part.
Let $z \in \{0, \ldots, r-1 \}$ such that $a_{z} = 0$ and $a_{z+1} > 0$, we have:

\begin{lem}[{\cite[Lemma 4.2]{HNS19}}]\label{lem:position-rays-toricpic2}
Let $I' \subseteq \{0, \, 1 \ldots, r \}$ and $G = \Span(v_i : i \in I')$. The vector
$a_1 v_1 + \ldots + a_r v_r$ belongs to $G$ if and only if
\begin{enumerate}
\item[i.]
$\{z+1, \ldots, \, r \} \subseteq I' $ or
\item[ii.]
$\{0, \ldots, z \} \subseteq I'$,
$\card( \{z+1, \ldots, r \} \setminus I') \geq 1$ and $a_i = a_j$ for all
$i, j \in \{z+1, \ldots, r \} \setminus I'$.
\end{enumerate}
\end{lem}

Since passing to multiples of polarizations has no effect on stability, instead of
studying the stability of $T_X(-\log D)$ with respect to
$\pi^\ast \cO_{\bP^s}(\alpha) \otimes \cO_X(\beta)$, we will study the stability of
$T_{X}(- \log D)$ with respect to the $\Q$-divisor
$\pi^{\ast} \cO_{\bP^s}(\nu) \otimes \cO_{X}(1)$ where $\nu = \alpha / \beta$.
Let $P$ be the polytope corresponding to the $\Q$-polarized toric variety $(X, L)$ where
$L= \pi^{\ast} \cO_{\bP^s}(\nu) \otimes \cO_{X}(1)$ with $\nu \in \Q_{>0}$.

\begin{notation}
For all $i \in \{0, 1, \ldots, r \}$, we set $\rV_i = \vol(P^{v_i})$. As
for all $j \in \{1, \ldots, s \}$, $\vol(P^{w_j}) = \vol(P^{w_0})$, we set
$\rW= \vol(P^{w_0})$.
\end{notation}

Let $\Delta \subseteq \Sigma(1)$ and $D$ a reduced Weil divisor on $X$ given by
$D = \sum_{\rho \in \Delta}{D_{\rho}}$. We set
\begin{align*}
I_{\Sigma} & = \{\Cone(v_0), \ldots, \Cone(v_r) \} ~,
\\
J_{\Sigma} & = \{\Cone(w_0), \ldots, \Cone(w_s) \} ~,
\\
I & = \{ i \in \{0, 1, \ldots, r \} : \Cone(v_i) \in I_{\Sigma} \setminus
(I_{\Sigma} \cap \Delta ) \} \quad \text{and}
\\
J & = \{ j \in \{0, 1, \ldots, s \} : \Cone(w_j) \in J_{\Sigma} \setminus
(J_{\Sigma} \cap \Delta ) \} ~.
\end{align*}
To study the stability of $\cE = T_{X}(- \log D)$ with respect to $L$,
it suffices to compare $\mu_{L}(\cE)$ and $\mu_{L}(\cE_G)$ when
$G = \Span(v_i, w_j : i \in I', j \in J')$ with $I' \subseteq I$, $J' \subseteq J$ and
$1 \leq \dim G < (r+s)$.
By Proposition \ref{prop:degree-invariant-divisor}, (\ref{eq:slope-logtangent}) and
(\ref{eq:slope-logtangent2}), we get
$$
\mu_L(\cE) = \dfrac{1}{r+s} \left( \sum_{i \in I}{\rV_i} + \card(J) \cdot \rW \right)
$$
and
$$
\mu_{L}(\cE_G) = \dfrac{1}{\dim G} \left( \sum_{i \in I'}{\rV_i} +
\card(J') \cdot \rW \right) .
$$
Here is a version of \cite[Proposition 4.1]{HNS19} for logarithmic tangent bundle.

\begin{prop}\label{prop:stability-logtangent-toricpic2}
The logarithmic tangent bundle $\cE = T_{X}(- \log D)$ is stable (resp. semistable)
with respect to $L= \pi^{\ast} \cO_{\bP^s}(\nu) \otimes \cO_{X}(1)$ if and only if
$\mu_L(\cE)$ is greater than (resp. greater than or equal to) the maximum of
\begin{enumerate}[itemsep=2pt, topsep=2pt]
\item
$\rV_{i_0}$ where $i_0 = \min I$ if $I \neq \nothing$
\label{prop:stability-logtangent-item1};
\item
$\frac{1}{r'} \left( \sum_{i \in I}{\rV_i} \right)$,
if $r'= \dim \Span(v_i: i \in I) \neq 0$
\label{prop:stability-logtangent-item2};
\item
$\dfrac{\card(J) \cdot \rW}{s'}$, if $0 < s' = \dim \Span(w_j : j \in J) <r+s$
\label{prop:stability-logtangent-item3};
\item
$\frac{1}{s+k} \left( \sum_{i \in I'}{\rV_i} + (s+1)\rW \right)$, if
$\card(J') = s+1$, $k = \card(I') < r$ and $\{z+1, \ldots, r \} \subseteq I' \subseteq I$
\label{prop:stability-logtangent-item4};
\item
$\frac{1}{s+k} \left( \sum_{i \in I'}{\rV_i} + (s+1) \rW \right)$,
if $\card(J') = s+1$, $k = \card(I') < r$ and $I' \subseteq I$ such that the condition
{\it ii.} of Lemma \ref{lem:position-rays-toricpic2} is verified.
\label{prop:stability-logtangent-item5}
\end{enumerate}
\end{prop}

\begin{proof}
Let $G = \Span(v_i, \, w_j : \, i \in I', \, j \in J')$ where $I' \subseteq I$ and
$J' \subseteq J$.
In Proposition \ref{prop:stability-logtangent-toricpic2}, each point corresponds to a
value of $\mu_L(\cE_G)$ for some $G$. In particular,
(\ref{prop:stability-logtangent-item1}) corresponds to $G = \Span(v_{i_0})$,
(\ref{prop:stability-logtangent-item2}) corresponds to $G = \Span(v_i : i \in I)$
and (\ref{prop:stability-logtangent-item3}) corresponds to $G = \Span(w_j : j \in J)$.

If $\card(J') = 0$, then for $\nothing \subsetneq I' \subseteq I$, we have
$\dim G \leq r$ and
$$
\mu_{L}(\cE_G) = \dfrac{1}{\dim G} \sum_{i \in I'}{\rV_i}~;
$$
this number is less than or equal to the maximum of the numbers given in
(\ref{prop:stability-logtangent-item1}) and (\ref{prop:stability-logtangent-item2}).

If $\card(I') = 0$, then for $\nothing \subsetneq J' \subseteq J$ such that
$\dim G < r+s$, we have
$$
\mu_{L}(\cE_G) = \dfrac{\card(J') \cdot \rW}{\dim G} ~ ;
$$
this number is less than or equal to that given in
(\ref{prop:stability-logtangent-item3}).

If $\card(I') = r+1$, then $\dim G < r+s$ if and only if $s'= \card(J') < s$.
If $1 \leq  s' < s$, then
$$
\mu_{L}(\cE_G) = \dfrac{1}{r + s'} \left( \sum_{i \in I'}{\rV_i} + s' \rW \right)
\leq \max \left(\dfrac{1}{r} \sum_{i \in I'}{\rV_i} , \rW \right) ~ .
$$

If $1 \leq \card(I') \leq r$, $1 \leq \card(J') \leq s$ and $\dim G < r+s$, then
$\mu_{L}(\cE_G)$ is less than or equal to the maximum of numbers given in
(\ref{prop:stability-logtangent-item1}), (\ref{prop:stability-logtangent-item2}) and
(\ref{prop:stability-logtangent-item3}).

It remains to study the case where $\card(J') = s+1$ and $1 \leq \card(I') < r$
(because if $\card(I') \geq r$, then $\dim G = r+s$). We will treat it in two cases.
\\
{\it First case : $a_r = 0$.}
For all $i \in \{1, \ldots, r \}$, $\rV_i = \rV_0$. If $r'= \card(I')$ and
$1 \leq r' < r$, then
$$
\mu_{L}(\cE_G) = \dfrac{1}{r' + s} \left( \sum_{i \in I'}{\rV_i} + (s+1) \rW \right)
\leq \max \left(\rV_0 , \dfrac{(s+1) \rW}{s} \right) ~ .
$$
{\it Second case : $a_r >0$.}
We set $r' = \card(I')$. If $I'$ satisfies the first (resp. second) condition of Lemma
\ref{lem:position-rays-toricpic2}, then the value of $\mu_{L}(\cE_G)$ is given in the
point (\ref{prop:stability-logtangent-item4})
(resp. (\ref{prop:stability-logtangent-item5})).
If $I'$ doesn't satisfy the conditions of Lemma \ref{lem:position-rays-toricpic2},
then $\dim G = r' + (s+1)$. Moreover, if $r'+(s+1)< r+s$, then the number
$\mu_{L}(\cE_G)$ is less than or equal to the maximum of the numbers given in
(\ref{prop:stability-logtangent-item1}) and (\ref{prop:stability-logtangent-item3}).
\end{proof}

\begin{rem}
If $a_1 = \ldots = a_r = 0$, to check the stability of $\cE$ with respect to $L$, it is
enough to compare $\mu_{L}(\cE)$ with the numbers given by the points
\ref{prop:stability-logtangent-item1}, \ref{prop:stability-logtangent-item2} and
\ref{prop:stability-logtangent-item3} of Proposition
\ref{prop:stability-logtangent-toricpic2}. In that case, we have
\begin{equation}\label{eq:volfacet-product-projective}
\rW = \dbinom{s+r-1}{s-1} \nu^{s-1} \quad \text{and} \quad
\rV_i = \dbinom{s+r-1}{s} \nu^s .
\end{equation}
\end{rem}

If $(a_1, \ldots, a_r) \neq (0, \ldots, 0)$, the results below will help us to determine
if $\cE$ is unstable with respect to $L$ without having to check each point of
Proposition \ref{prop:stability-logtangent-toricpic2}.
Let $z \in \{0, 1, \ldots, r-1 \}$ such that $a_z = 0$ and $a_{z+1} > 0$ where $a_0 = 0$.
Let $k \in \{0, \ldots, s \}$. We set
\begin{align*}
\rV_{0k} & = \sum_{d_{z+1} + \ldots + d_r =s-k}{a_{z+1}^{d_{z+1}} \cdots a_{r}^{d_r} }
\quad \text{and}\\
\rW_k & = \sum_{d_{z+1} + \ldots + d_r =s-1-k}{ a_{z+1}^{d_{z+1}} \cdots a_{r}^{d_r} }
\end{align*}
where $\rW_s =0$. For $i \in \{z+1, \ldots, r \}$, we set
$$
\rV_{ik} = \sum_{\substack{d_{z+1} + \ldots + d_{i-1}\\ + d_{i+1} + \ldots + d_r=s-k}}{
a_{z+1}^{d_{z+1}} \cdots a_{i-1}^{d_{i-1}} a_{i+1}^{d_{i+1}} \cdots a_{r}^{d_r} }
$$
and for $i \in \{1, \ldots, z\}$, we set $\rV_{i k} = \rV_{0k}$.

\begin{rem}
If $r=1$, we set $\rV_{1s} = 1$ and for $k \in \{0, \ldots, s-1 \}$, $\rV_{1k} = 0$.
We have $\rW_{s-1} = 1$ and $\rV_{i s} = 1$ for any $i \in \{0, \ldots, r \}$.
\end{rem}

\begin{lem}\label{lem:relation-volfacet-toricpic2}
For all $i \in \{1, \ldots, r \}$, $\rV_0 = a_i \rW + \rV_i$.
\end{lem}

\begin{proof}
To show the lemma, it suffices to show that: for any $k \in \{0, \ldots, s-1 \}$,
$a_i \rW_k + \rV_{ik} = \rV_{0k}$.
If $i \in \{1, \ldots, z \}$, the equality is true because $a_i = 0$. We assume that
$i \in \{z+1, \ldots, r \}$, we have
\begin{align*}
\rV_{0k} & = \sum_{d_{z+1} + \ldots + d_r =s-k}{a_{z+1}^{d_{z+1}} \cdots a_{r}^{d_r} }
\\ & =
\sum_{\substack{d_{z+1}+ \ldots + d_r =s-k \\ d_i = 0}}{
a_{z+1}^{d_{z+1}} \cdots a_{r}^{d_r} } +
\sum_{\substack{d_{z+1}+ \ldots + d_r =s-k \\ d_i \geq 1}}{
a_{z+1}^{d_{z+1}} \cdots a_{r}^{d_r} }
\end{align*}
The first term of the second line corresponds to the number $\rV_{ik}$ and the
second to $a_i \rW_k$ (it suffices to replace $d_i$ by $d_{i}'+1$).
Hence, $\rV_{0k} = \rV_{ik} + a_i \rW_k$.
\end{proof}

\begin{lem}\label{lem:stability-logtangent-toricpic2}
Let $(a_1, \ldots, a_r) \neq (0, \ldots, 0)$.
\begin{enumerate}
\item If $a_r \geq 2$, then $s \rV_0 - (s+1) \rW \geq s \rV_r$.
\item If $ r \geq 2$ and $i \in \{1, \ldots, r-1 \}$ with $a_i < a_r$, then
$\rV_i - \rW \geq \rV_r$.
\end{enumerate}
\end{lem}

\begin{proof}
If $a_r \geq 2$, then
$\left( s - \dfrac{s+1}{a_r} \right) = \dfrac{a_r s - (s+1)}{a_r} \geq
\dfrac{2s -(s+1)}{a_r} \geq 0$ because $s \geq 1$. Hence,
\begin{align*}
s \rV_0 - (s+1)\rW & = s \rV_0 - \dfrac{s+1}{a_r}(\rV_0 - \rV_r)
\\ & =
\left( s - \dfrac{s+1}{a_r} \right) \rV_0 + \dfrac{s+1}{a_r} \rV_r
\\ & \geq
\left( s - \dfrac{s+1}{a_r} \right) \rV_r + \dfrac{s+1}{a_r} \rV_r = s \rV_r ~. 
\end{align*}
As $\rV_0 = a_i \rW + \rV_i = a_r \rW + \rV_r$, we get $\rV_i = (a_r - a_i) \rW + \rV_r$.
If $a_r > a_i$, then $a_r - a_i \geq 1$; therefore $\rV_i \geq \rW + \rV_r$.
\end{proof}

\subsection{Stability of logarithmic tangent bundles on a product of projective spaces}
\label{sec:stab-product-projective}
We assume that $a_1 = \ldots = a_r = 0$. We have
$X \cong \bP^{s} \times \bP^{r}$. We denote by $\pi_1 : X \rightarrow \bP^s$ and
$\pi_2 : X \rightarrow \bP^r$ the projection maps.
If $i \in \{0, \ldots, r \}$ and $j \in \{0, \ldots, s \}$, by Proposition
\ref{prop:logtangent-notsimple1} the vector bundle $T_{X}(- \log(D_{v_i} + D_{w_j}))$
is decomposable. Reasoning as in the proof of Proposition \ref{prop:logtangent-notsimple1},
it is easy to show that:

\begin{lem}\label{lem:logtangent-notsimple-prodproj}
Let $i, i' \in \{0, \ldots, r \}$ and $j, j' \in \{0, \ldots, s \}$ such that $i \neq i'$
and $j \neq j'$. Then
\begin{enumerate}
\item
$T_{X}(- \log(D_{v_i}+ D_{v_{i'}}) ) \cong
T_{\bP^s} \oplus T_{\bP^r}(- \log(\pi_2(D_{v_i}) + \pi_2(D_{v_{i'}}) \, ))$
\item
$T_{X}(- \log(D_{w_j}+ D_{w_{j'}}) ) \cong
T_{\bP^s}(- \log(\pi_1(D_{w_j}) + \pi_1(D_{w_{j'}}) \, )) \oplus T_{\bP^r}$
\item
$\cE = T_{X}(- \log D_{v_i})$ satisfies
$$
\cE \cong T_{\bP^s} \oplus \left( \bigoplus_{k=0, \, k \neq i}^{r}{ \cE_{F_k}} \right)
$$
where $F_k = \Span(v_k)$.
\item
$\cE = T_{X}(- \log D_{w_j})$ satisfies
$$
\cE \cong \left( \bigoplus_{k=0, \, k \neq j}^{s}{ \cE_{G_k}} \right) \oplus T_{\bP^r}
$$
where $G_k = \Span(w_k)$.
\end{enumerate} 
\end{lem}

Let $D= \sum_{\rho \in \Delta} D_\rho$ with $\Delta \subseteq \Sigma(1)$. As for any
$\Delta$ such that $\card(\Delta) \in \{1, 2 \}$, the vector bundle $\cE = T_X(-\log D)$
is decomposable, we deduce that
$$
\Stab(\cE) = \nothing ~.
$$
In Table \ref{tab:stab-logtgt-prodprojective}, we give the values of $\nu$ for which
$\cE$ is semistable with respect to $\pi^{\ast} \cO_{\bP^s}(\nu) \otimes \cO_{X}(1)$.
We recall that by Equation (\ref{eq:volfacet-product-projective}),
$\rV = \dfrac{r \nu}{s} \rW$.

\begin{table}
\setcellgapes{3pt}
\begin{tabular}{|C{4.5cm}|C{2.5cm}|c|}
\hline
Divisor $D$ & $\sStab(\cE)$ & References \\ \hline
$D_{v_i} ~$, $0 \leq i \leq r$ & $\nu = \dfrac{s+1}{r}$ &
Proposition \ref{prop:stab-prodprojective-logtgt1} \\ \hline
$D_{w_j}~$, $0 \leq j \leq s$ & $\nu = \dfrac{s}{r+1}$ &
Proposition \ref{prop:stab-prodprojective-logtgt1} \\ \hline
$D_{v_j} + D_{w_j}$ & $\nu = \dfrac{s}{r}$ &
Proposition \ref{prop:stab-prodprojective-logtgt3} \\ \hline
$D_{v_i} + D_{v_j} ~$, $0 \leq i < j \leq r$ & $\nothing$ &
Proposition \ref{prop:stab-prodprojective-logtgt2} \\ \hline
$D_{w_i} + D_{w_j}~$, $0 \leq i < j \leq s$ & $\nothing$ &
Proposition \ref{prop:stab-prodprojective-logtgt2} \\ \hline
\end{tabular}
\caption{Stability of $T_{X}(- \log D)$ when $a_1 = \ldots = a_r =0$}
\label{tab:stab-logtgt-prodprojective}
\end{table}

\begin{prop}\label{prop:stab-prodprojective-logtgt1}
Let $i \in \{0, 1, \ldots, r \}$ and $j \in \{0, 1, \ldots, s \}$, then
\begin{enumerate}
\item
$T_X(- \log D_{v_i})$ is polystable with respect to
$\pi^{\ast} \cO_{\bP^s}(\nu) \otimes \cO_{X}(1)$ if and only if $\nu = \dfrac{s+1}{r}$;
\item
$T_X(- \log D_{w_j})$ is polystable with respect to
$\pi^{\ast} \cO_{\bP^s}(\nu) \otimes \cO_{X}(1)$ if and only if $\nu = \dfrac{s}{r+1}$.
\end{enumerate}
\end{prop}

\begin{proof}
We start with $\cE = T_{X}(- \log D_{v_i} )$. We have
$$
\mu_L(\cE) = \dfrac{1}{r+s}(r \rV + (s+1)\rW) = \dfrac{(r^2 \nu + s^2 + s)\rW}{s(r+s)} =
\dfrac{(r^2 \nu + s^2 + s)\rV}{r \nu (r + s)} ~.
$$
By Proposition \ref{prop:stability-logtangent-toricpic2}, to have the semistability, it
is enough to compare $\mu_L(\cE)$ with
$$\max \left( \dfrac{(s+1) \rW}{s} , \rV \right).$$
If $\mu_L(\cE) \geq \rV$, then $\dfrac{r^2 \nu + s^2 + s}{r \nu (r + s)} \geq 1$, i.e
$(r^2 \nu + s^2 + s) \geq (r^2 \nu + rs \nu)$; hence, $\nu \leq \dfrac{s+1}{r}$.
\\
If $\mu_L(\cE) \geq \dfrac{(s+1) \rW}{s}$, then
$\dfrac{r^2 \nu + s^2 + s}{s(r+s)} \geq \dfrac{s+1}{s}$, i.e $\nu \geq \dfrac{s+1}{r}$.
Therefore, $T_{X}(- \log D_{v_i})$ is semistable with respect to
$\pi^{\ast} \cO_{\bP^s}(\nu) \otimes \cO_{X}(1)$ if and only if $\nu = \dfrac{s+1}{r}$.

If we regard the case where $\cE = T_{X}(- \log D_{w_j})$, it is enough to compare
$\mu_{L}(\cE)$ with $\max \left( \dfrac{(r+1) \rV}{r} , \rW \right)$. A similar
computation gives the result.
\end{proof}

\begin{prop}\label{prop:stab-prodprojective-logtgt2}
Let $i, i' \in \{0, \ldots, r \}$ and $j, j' \in \{0, \ldots, s \}$ such that $i \neq i'$
and $j \neq j'$. For any $ L \in \Amp(X)$, the logarithmic tangent bundles
$T_{X}(- \log(D_{v_i} + D_{v_{i'}}) )$ and $T_{X}(- \log (D_{w_j} + D_{w_{j'}}) )$
are unstable with respect to $L$.
\end{prop}

\begin{proof}
Let $\cE = T_{X}(- \log (D_{v_i} + D_{v_{i'}}) \,$. We have
\begin{align*}
\mu_L(\cE) & = \dfrac{(r-1)\rV + (s+1)\rW}{r+s} \\
& = \dfrac{(r(r-1) \nu + s^2 + s) \rW}{s(r+s)} \\
& = \dfrac{(r(r-1) \nu + s^2 + s) \rV}{r \nu (r + s)}.
\end{align*}
To check the semistability, it is enough to compare $\mu_L(\cE)$ with
$$\max \left( \dfrac{(s+1) \rW}{s} , \rV \right) .$$
If $r=1$, then $\mu_{L}(\cE) = \rW$. Hence, $T_{X}(- \log(D_{v_i} + D_{v_{i'}}) )$ is not
semistable with respect to $L$.
We now consider the case $r \geq 2$.
\begin{itemize}
\item
If $\mu_L(\cE) \geq \rV$, then $\dfrac{r(r-1) \nu + s^2 + s}{r \nu (r + s)} \geq 1$,
i.e $\nu \leq \dfrac{s}{r}$.
\item
If $\mu_L(\cE) \geq \dfrac{(s+1)\rW}{s}$, then
$\dfrac{r(r-1) \nu + s^2 + s}{s(r+s)} \geq \dfrac{s+1}{s}$, i.e
$\nu \geq \dfrac{s+1}{r-1} > \dfrac{s}{r}$.
\end{itemize}
As $\nu$ cannot satisfy this two conditions, we deduce that
$T_{X}(- \log(D_{v_i} + D_{v_{i'}}))$ is not semistable with respect to $L$.
\end{proof}

\begin{prop}\label{prop:stab-prodprojective-logtgt3}
Let $i \in \{0, \ldots, r \}$, $j \in \{0, \ldots, s \}$ and
$D = D_{v_i} + D_{w_j}$. Then $T_X(- \log D)$ is polystable with respect to
$\pi^{\ast} \cO_{\bP^s}(\nu) \otimes \cO_{X}(1) $ if and only if $\nu = \dfrac{s}{r}$.
\end{prop}

\begin{proof}
We have
$$
\mu_L(\cE) = \dfrac{1}{r+s}(r \rV + s \rW) = \dfrac{(r^2 \nu + s^2) \rW}{s(r+s)} =
\dfrac{(r^2 \nu + s^2) \rV}{r \nu (r + s)} ~.
$$
To check the semi-stability, it is enough to compare $\mu_L(\cE)$ with
$\max(\rV, \rW)$.
\begin{itemize}
\item
If $\mu_L(E) \geq \rV$, then $\dfrac{r^2 \nu + s^2 }{r \nu (r + s)} \geq 1$,
i.e $\nu \leq \dfrac{s}{r}$.
\item
If $\mu_L(E) \geq \rW$, then $\dfrac{r^2 \nu + s^2}{s(r+s)} \geq 1$,
i.e $\nu \geq \dfrac{s}{r}$.
\end{itemize}
Hence, $T_X(- \log D)$ is semistable with respect to
$\pi^{\ast} \cO_{\bP^s}(\nu) \otimes \cO_{X}(1)$ if and only if $\nu = \dfrac{s}{r}$.
\end{proof}

\begin{rem}\label{rem:stab-prodprojective}
According to (\ref{eq:divisor-toricpicard-two}) and (\ref{eq:canonical-toricpicard-two}),
when $a_1 = \ldots = a_r = 0$, we get
$D_{v_i} \sim_\lin D_{v_0}$, $D_{w_j} \sim_\lin D_{w_0}$ and
$-K_{X} \sim_\lin (s+1) D_{w_0} + (r+1) D_{v_0}.$
By the above study, we see that: if $\sStab( T_X(- \log D)) \neq \nothing$, then
$T_{X}(- \log D)$ is semistable with respect to $L$ if and only if
$L \cong \cO_{X}( - \alpha \, (K_X +D))$ with $\alpha \in \Q_{>0}$.
\end{rem}

\section{Stability on smooth toric varieties of Picard rank two}
\label{sec:stab-logtangent-toricpic2}

In this section, we study the stability of $T_{X}(- \log D)$ when
$$X = \bP \left( \cO_{\bP^s} \oplus \bigoplus_{i=1}^{r} \cO_{\bP^s}(a_i) \right)$$ with
$a_r \geq 1$. Let $\Delta \subseteq \Sigma(1)$ and $D=\sum_{\rho \in \Delta}{D_\rho}$.
By Corollary \ref{cor:logtangent-unstability}, we will only study the case where
$\card(\Delta) \in \{1, 2 \}$. The case $\card(\Delta) = 0$ was treated by
Hering-Nill-S\"uss in \cite{HNS19}.
We recall that $a_0 = 0$. We have a version of Lemma
\ref{lem:logtangent-notsimple-prodproj} when $a_r \geq 1$.

\begin{lem}\label{lem:logtangent-notsimple-toricpic2}
We assume that $a_r \geq 1$.
\begin{enumerate}
\item
If $i \in \{0, \ldots, r \}$ and $j \in \{0, \ldots, s \}$, then
$T_{X}(- \log(D_{v_i} + D_{w_j}))$ is decomposable and
$$
\cE = \left( \bigoplus_{l=0, \, l \neq j}^{s}{ \cE_{G_l}} \right) \oplus
\left( \bigoplus_{k=0, \, k \neq i }^{r}{ \cE_{F_k}} \right)
$$
where $G_l = \Span(w_l)$ and $F_k = \Span(v_k)$.
\item
If $D= D_{w_i}+D_{w_j}$ for $0 \leq i < j \leq s$, then the sheaf $\cE= T_{X}(- \log D)$
is decomposable and
$$
\cE = \left( \bigoplus_{k=0, \, k \neq i}^{s}{ \cE_{G_k}} \right) \oplus \cE_F
$$
where $G_k = \Span(w_k)$ and $F = \Span(v_0, \ldots, v_r)$.
\label{lem:logtangent-notsimple-item2}
\item
If $D= D_{v_i}+D_{v_j}$ for $0 \leq i < j \leq r$, then the sheaf $\cE= T_{X}(- \log D)$
is decomposable. If $a_i < a_j$, then
$$
\cE = \left( \bigoplus_{l=0}^{s}{ \cE_{G_l}} \right) \oplus
\left( \bigoplus_{k=0, \, k \notin \{i, j\} }^{r}{ \cE_{F_k}} \right)
$$
where $G_l = \Span(w_l)$ and $F_k = \Span(v_k)$. If $a_i = a_j$, then
$$
\cE = \cE_{G} \oplus \cE_{F}
$$
where
$G = \Span(w_l, v_k : l \in \{0, \ldots, s \},
k \in \{0, \ldots, r \} \setminus \{i, j \} )$ and $F = \Span(v_j)$.
\end{enumerate}
\end{lem}

If $D \in \{ D_{v_i} : 0 \leq i \leq r \}$, we will not search to know if
$\cE = T_{X}(- \log D)$ is decomposable because it depends on the numbers $r$, $s$ and
$a_1, \ldots, a_r$. In particular, if we assume that $r=2$, $s = 1$ and
$(a_1, a_2) = (0, 1)$, then $\cE = T_{X}(- \log D_{v_1})$ is decomposable with
$\cE = \cE_{F} \oplus \cE_G$ where $F = \Span(v_2, w_1)$ and $G = \Span(v_0)$ while
$\cF = T_{X}(- \log D_{v_2})$ is not decomposable.

For $D= D_{w_i}$ with $0 \leq i \leq s$, the vector bundle $\cE= T_{X}(-\log D)$ is
decomposable and its decomposition is identical to that given in the point
\ref{lem:logtangent-notsimple-item2} of Lemma \ref{lem:logtangent-notsimple-toricpic2}.
According to Lemma \ref{lem:logtangent-notsimple-toricpic2}, if
$D= \sum_{\rho \in \Delta} D_\rho$ with $\card(\Delta)=2$, then $\cE = T_{X}(- \log D)$
is not stable with respect to any polarizations.

Let $L = \pi^{\ast} \cO_{\bP^s}(\nu) \otimes \cO_{X}(1)$ be an element of
$\Amp(X) \subseteq N^{1}(X) \otimes_{\Z} \R$.
We recall that the numbers $\rV_0, \ldots, \rV_r$ defined on Section
\ref{sec:toricpicard-two} are polynomials of $\nu$ of degree $s$ and $\rW$ is
a polynomial of degree $s-1$.
If $\cE = T_{X}(- \log D)$, the number $\mu_{L}(\cE)$ is a polynomial of degree at
most $s$. Let $\rP_0$, $\rP_1$, $\rP_2$ and $\rQ$ be the polynomials of $\nu$ defined by
$$
\rP_0 = \mu_{L}(\cE) - \rV_0 ~,~ \rP_1 = \mu_{L}(\cE) - \rV_1 ~ ,~
\rP_2 = \mu_L(\cE) - \rV_2 ~\text{and}~ \rQ = \mu_{L}(\cE) - \rW ~.
$$
Under certain conditions on $a_i, r$ and $s$, these polynomials ($\rP_0, \rP_1, \rP_2$
and $\rQ$) have respectively one or no positive root. If the positive root exists, we
denote by
\begin{itemize}
\item
$\nu_i$ the unique positive root of $\rP_i$ where $i \in \{0, 1, 2 \}$
\item
$\nu_3$ the unique positive root of $\rQ$.
\end{itemize}
In Tables \ref{tab:stab-logtgt-toricpic2-1}, \ref{tab:stab-logtgt-toricpic2-2} and
\ref{tab:stab-logtgt-toricpic2-3}, we give the values of $\nu$ for which
$\cE = T_{X}(- \log D)$ is (semi)stable with respect to
$L = \pi^{\ast} \cO_{\bP^s}(\nu) \otimes \cO_{X}(1)$.

\begin{table}
\setcellgapes{3pt}
\begin{tabular}{|L{2.1cm}|C{2.1cm}|C{2cm}|c|c|}
\hline
Divisor $D$ & Condition on $r$ and $a_i$ & Condition on $s$ & $\Stab(\cE)$
& $\sStab(\cE)$ \\ \hline
\makecell[l]{$D_{w_j}$\\ $0 \leq j \leq s$\\ Prop. \ref{prop:logtgt-toricpic-Dw}}
& \makecell{$r \geq 1$ and\\ $a_r \geq 1$} & $s \geq 1$ & $\nothing$ & $\nothing$ \\ \hline
\makecell[l]{$D_{v_i}$\\ $1 \leq i \leq r-1$\\ Prop. \ref{prop:logtgt-toricpic-Dv}}
& \makecell{$r \geq 2$ and \\$a_r \geq 1$} & $s \geq 1$ & $\nothing$ & $\nothing$ \\ \hline
$D_{v_r}$ & \makecell{$r \geq 1, a_{r} = 1$\\ and $a_{r-1} = 0$} & $s \geq 1$
& $0< \nu < \nu_0 $ & $0 < \nu \leq \nu_0$ \\ \cline{2-5}
Theorem \ref{theo:logtgt-toricpic-Dvr} &
\makecell{$r \geq 1$ and \\($a_r \geq 2$ or\\ $a_{r-1} \neq 0$)} & $s \geq 1$
& $\nothing$ & $\nothing$ \\ \hline
& $r=1$ & $s \geq 1$ & $0 < \nu < \nu_1$ & $0 < \nu \leq \nu_1$ \\ \cline{2-5}
$D_{v_0}$ & \makecell{$r \geq 2$ and\\ $a_1 < a_r$} & $s \geq 1$ & $\nothing$ & $\nothing$
\\ \cline{2-5}
Theorem \ref{theo:logtgt-toricpic-Dv0} &
$r \geq 2 $ and & $a \geq \frac{s+1}{r-1} $ & $\nothing$ & $\nothing$ \\ \cline{3-5}
Lemma \ref{lem:logtgt-toricpic-Dv0} &
$a_1 = a_r = a$ & $\frac{s}{r} \leq a < \frac{s+1}{r-1}$
& $0 < \nu < \nu_1 $ & $0< \nu \leq \nu_1$ \\ \cline{3-5}
Theorem \ref{theo:logtgt-toricpic-Dv01} &
& $a \, r < s$ & $\nu_3 < \nu < \nu_1$ & $\nu_3 \leq \nu \leq \nu_1$ \\ \hline
\end{tabular}
\caption{Stability of $T_{X}(- \log D)$ when $a_r \geq 1$}
\label{tab:stab-logtgt-toricpic2-1}
\end{table}

\begin{table}
\setcellgapes{3pt}
\begin{tabular}{|L{3.5cm}|C{2.7cm}|c|c|}
\hline
Divisor $D$ & Condition on $r$ and $a_i$ & Condition on $s$ & $\sStab(\cE)$ \\ \hline
\makecell[l]{$D_{w_i} + D_{w_j}$\\ $0 \leq i < j \leq s$\\
Proposition \ref{prop:logtgt-toricpic-Dw}} & $r \geq 1$ and $a_r \geq 1$ & $s \geq 1$
& $\nothing$ \\ \hline
\makecell[l]{$D_{v_i} + D_{v_j}$\\ $1 \leq i < j \leq r$\\
Corollary \ref{cor:logtgt-toricpic-Dv}} &
$r \geq 2$ and $a_r \geq 1$ & $s \geq 1$ & $\nothing$ \\ \hline
\makecell[l]{$D_{v_i} + D_{w_j}~$, $j \geq 0$\\ and $1 \leq i \leq r-1$\\
Proposition \ref{prop:logtgt-toricpic-Dv}}
& $r \geq 2$ and $a_r \geq 1$ & $s \geq 1$ & $\nothing$ \\ \hline
\makecell[l]{$D_{v_r} + D_{w_j}~$, $j \geq 0$ \\
Corollary \ref{cor:logtgt-toricpic-Dvr}} &
$r \geq 1$ and $a_r \geq 1$ & $s \geq 1$ & $\nothing$ \\ \hline
$D_{v_0} + D_{w_j}~$, $0 \leq j \leq s$ & $r=1$ & $s \geq 1$ & $\nu=\nu_3$ \\ \cline{2-4}
Theorem \ref{theo:logtgt-toricpic-Dv0} &
\makecell{$r \geq 2$ and\\ $a_1 < a_r$} & $s \geq 1$ & $\nothing$ \\ \cline{2-4}
Lemma \ref{lem:logtgt-toricpic-Dv0} &
$r \geq 2$ and & $s \leq a(r-1)$ & $\nothing$ \\ \cline{3-4}
Proposition \ref{prop:logtgt-toricpic-Dv02} &
$a_1 = a_r = a$ & $s > a(r-1)$ & $\nu = \nu_3$ \\ \hline
$D_{v_0} + D_{v_i}~$, $2 \leq i \leq r$ & \makecell{$r \geq 2$ and\\ $a_1 < a_r$} &
$s \geq 1$ & $\nothing$ \\ \cline{2-4}
Lemma \ref{lem:logtgt-toricpic-Dv0} &
$r \geq 2$ and & $s \leq a(r-1)$ & $\nothing$ \\ \cline{3-4}
Theorem \ref{prop:logtgt-toricpic-Dv02} &
$a_1 = a_r = a$ & $s > a(r-1)$ & $\nu = \nu_3$ \\ \hline
\end{tabular}
\caption{Stability of $T_{X}(- \log D)$ when $a_r \geq 1$}
\label{tab:stab-logtgt-toricpic2-2}
\end{table}

\begin{table}
\setcellgapes{3pt}
\begin{tabular}{|L{2.8cm}|c|c|c|}
\hline
Divisor $D$ & Condition on $r$ and $a_i$ & Condition on $s$ & $\sStab(\cE)$ \\ \hline
& $r=1$ & $s \geq 1$ & $\nu >0$ \\ \cline{2-4}
$D_{v_0} + D_{v_1}$ & $r \geq 2$ and $0 = a_1 < a_r$ & $s \geq 1$ & $\nothing$
\\ \cline{2-4}
& $r \geq 2$ and & $s \leq a(r-1)$ & $\nothing$ \\ \cline{3-4}
Theorem \ref{theo:logtgt-toricpic-Dv0} &
$a_1 = a_r = a$ & $s > a(r-1)$ & $\nu = \nu_3$ \\ \cline{2-4}
Proposition \ref{prop:logtgt-toricpic-Dv0Dv1} &
$r = 2$ and & $s \leq \delta_2 $ & $\nothing$ \\ \cline{3-4}
Proposition \ref{prop:logtgt-toricpic-Dv02} &
$0< a_1 < a_2 $ & $s > \delta_2$ & $\nu = \nu_3$ \\ \cline{2-4}
Proposition \ref{prop:logtgt-toricpic-Dv0Dv1r2} &
$r \geq 3$ and $a_2 < a_r$ & $s \geq 1$ & $\nothing$ \\ \cline{2-4}
Proposition \ref{prop:logtgt-toricpic-Dv0Dv1r3} &
$r \geq 3$ and & $s \leq \delta_r$ & $\nothing$ \\ \cline{3-4}
& $0< a_1< a_2 = \ldots = a_r$ & $s > \delta_r $ & $\nu = \nu_3$ \\ \hline
\end{tabular}
\caption{Stability of $T_{X}(- \log(D_{v_0} + D_{v_1}))$ when $a_r \geq 1$}
\label{tab:stab-logtgt-toricpic2-3}
\end{table}

\subsection{Case of divisors coming from the base}

\begin{prop}\label{prop:logtgt-toricpic-Dw}
Let $(a_1, \ldots, a_r) \neq (0, \ldots, 0)$.
Let $i, j \in \{0, \ldots,s \}$ distinct, $\cE = T_{X}(- \log D_{w_i})$ and
$\cF = T_{X}(- \log( D_{w_i} + D_{w_j}))$.
For any $L \in \Amp(X)$, the vector bundles
$\cE$ and $\cF$ are not semistable with respect to $L$.
\end{prop}

\begin{proof}
Let $\, L = \pi^{\ast} \cO_{\bP^s}(\nu) \otimes \cO_{X}(1)$, we have
$$
\mu_L(\cF) = \dfrac{(s-1) \rW + (\rV_0 + \ldots + \rV_{r})}{r+s} <
\dfrac{s \rW + (\rV_0 + \ldots + \rV_{r})}{r+s} = \mu_L(\cE).
$$
By Lemma \ref{lem:logtangent-notsimple-toricpic2} and Proposition
\ref{prop:stability-logtangent-toricpic2}, to check the stability of $\cE$ (resp. $\cF$)
with respect to $L$, it is enough to compare $\mu_{L}(\cE)$ (resp. $\mu_{L}(\cF)$) with
$$
\max \left( \rV_0, \dfrac{\rV_0 + \rV_1 + \ldots + \rV_r}{r} \right)~.
$$
By Lemma \ref{lem:relation-volfacet-toricpic2}, we have $\rV_0 = a_r \rW + \rV_r$.
As $a_r \geq 1$, we get $\rV_0 \geq \rW + \rV_r$, i.e $\rV_0 - \rW \geq \rV_r$. Thus,
\begin{align*}
(r+s) \left( \rV_0 - \mu_{L}(\cE) \right) & =
s(\rV_0 - \rW) + \left( r \rV_0 - (\rV_0 + \ldots + \rV_{r-1}) \right) - \rV_r
\\ & \geq
s(\rV_0 - \rW) - \rV_r \quad \text{because} ~ \rV_i \leq \rV_0
\\ & \geq (s-1) \rV_r ~.
\end{align*}
If $s \geq 2$, then $\rV_0 - \mu_{L}(\cE) > 0$ and $\rV_0 - \mu_{L}(\cF) >0$.
Thus, $\cE$ and $\cF$ are not semistable with respect to $L$.
We now assume that $s = 1$. Using the expressions of $\rV_i$ and $\rW$ given in Section
\ref{sec:toricpicard-two}, we have
$$
\rW = 1 ~, \quad \rV_0 = (a_1 + \ldots + a_r) + r \nu \quad \text{and} \quad
\rV_i = \rV_0 - a_i \quad \text{for} ~ i \in \{1, \ldots, r \} ~. 
$$
As
$$
\mu_{L}(\cE) = \dfrac{1 + (r+1)\rV_0 - (a_1 + \ldots + a_r)}{r+1}
$$
and
$$
\dfrac{\rV_0 + \ldots + \rV_r}{r} = \dfrac{(r+1)\rV_0 - (a_1 + \ldots + a_r)}{r} ~,
$$
we get
$$
\dfrac{\rV_0 + \ldots + \rV_r}{r} - \mu_{L}(\cE) = \dfrac{
(r+1) \rV_0 - (a_1 + \ldots + a_r) - r}{r(r+1)} ~ .
$$
As $(a_1 + \ldots + a_r - 1) \geq 0$ and $ \nu >0$, we have
\begin{align*}
(r+1) \rV_0 - (a_1 + \ldots + a_r) - r = &(r+1)(a_1 + \ldots + a_r) + (r+1)r \nu
\\ & - (a_1 + \ldots + a_r) - r
\\ = & r (a_1 + \ldots + a_r - 1) + (r+1)r \nu >0 
\end{align*}
Thus, $\cE$ and $\cF$ are not semistable with respect to $L$.
\end{proof}

\subsection{Sum of divisors coming from the base and the bundle: first part}
We first study the stability of $T_{X}(- \log D)$ when $r \geq 2$ and
\begin{align*}
D \in & \{ D_{v_i} : 1 \leq i \leq r-1 \} \cup
\{D_{v_i} + D_{w_j} : 1 \leq i \leq r-1 ~ \text{and} ~ 0 \leq j \leq s \}
\\ & \cup \{D_{v_i} + D_{v_j} : 1 \leq i < j \leq r \} ~. 
\end{align*}

\begin{prop}\label{prop:logtgt-toricpic-Dv}
Let $r \geq 2$, $(a_1, \ldots, \, a_r) \neq (0, \ldots, 0)$,
$i \in \{1, \ldots, r-1 \} \,$ and $j \in \{0, \ldots, s \}$. For any $L \in \Amp(X)$,
the logarithmic tangent bundles $T_{X}(- \log D_{v_i})$ and
$T_{X}(- \log(D_{v_i} + D_{w_j}))$ are not semistable with respect to $L$.
\end{prop}

\begin{proof}
We set $\cE = T_{X}(- \log D_{v_i})$ and $\cF = T_{X}(- \log(D_{v_i} + D_{w_j}))$.
Let $L \in \Amp(X)$, we have
$$
\mu_L(\cE) = \dfrac{(s+1) \rW + (\rV_0 + \ldots + \rV_{i-1} + \rV_{i+1} + \ldots +
\rV_r)}{r+s}
$$
and $\mu_L(\cF) < \mu_{L}(\cE)$. By Lemma \ref{lem:stability-logtangent-toricpic2},
\begin{align*}
(r+s)( \rV_0 - \mu_L(\cE) ) = & (s+1)(\rV_0 - \rW) - \rV_r + (r-1) \rV_0 \\ &
- (\rV_0 + \ldots + \rV_{i-1} + \rV_{i+1} + \ldots + \rV_{r-1})
\\ \geq & (s+1)(\rV_0 - \rW) - \rV_r
\\ \geq & (s+1) \rV_r - \rV_r = s \rV_r
\end{align*}
Hence, by Proposition \ref{prop:stability-logtangent-toricpic2}, we deduce that $\cE$
and $\cF$ are not semistable with respect to $L$.
\end{proof}

\begin{cor}\label{cor:logtgt-toricpic-Dv}
Let $r \geq 2$ and $(a_1, \ldots, a_r) \neq (0, \ldots, 0)$. Let $L \in \Amp(X)$,
for any $i,j \in \{1, \ldots, r \}$ with $i \neq j$, the logarithmic tangent bundle
$T_{X}(- \log( D_{v_i} + D_{v_j}))$ is not semistable with respect to $L$.
\end{cor}

\begin{proof}
If we set $\cG = T_{X}(- \log(D_{v_i} + D_{v_j}))$, by using the proof of Proposition
\ref{prop:logtgt-toricpic-Dv}, we have $\mu_{L}(\cG) < \mu_{L}(\cE) < \rV_0$. Thus,
$\cG$ is not semistable with respect to $L$.
\end{proof}

We now study the stability of $T_{X}(- \log D)$ when
$ D \in \{ D_{v_r} \} \cup \{D_{v_r} + D_{w_j} : 0 \leq j \leq s \} ~.$

\begin{theorem}\label{theo:logtgt-toricpic-Dvr}
Let $r \geq 1$ and $a_r \geq 1$. We have $\Stab( T_{X}(- \log D_{v_r})) \neq \nothing$
if and only if $\sStab( T_{X}(- \log D_{v_r})) \neq \nothing$ if and only if $a_r = 1$
and $a_{r-1} = 0$. If $a_r = 1$ and $a_{r-1} = 0$, then the logarithmic tangent bundle
$T_{X}(- \log D_{v_r})$ is stable (resp. semi-stable) with respect to
$\pi^{\ast} \cO_{\bP^s}(\nu) \otimes \cO_{X}(1)$ if and only if $0 < \nu < \nu_0$
(resp. $0 < \nu \leq \nu_0$) where $\nu_0$ is the positive root of
$$
\rP_0(x) = \sum_{k = 0}^{s-1} \dbinom{s+r-1}{k} x^k - s \dbinom{s+r-1}{s} x^s ~.
$$
\end{theorem}

\begin{proof}
Let $\cE = T_{X}(- \log D_{v_r})$ and
$L = \pi^{\ast} \cO_{\bP^s}(\nu) \otimes \cO_{X}(1)$. We have
$$
(r+s) \mu_{L}(\cE) = (s+1) \rW + \rV_0 + \rV_1 + \ldots + \rV_{r-1} ~.
$$
If $a_r \geq 2$, using the first point of Lemma \ref{lem:stability-logtangent-toricpic2}
and the fact that $\rV_i \leq \rV_0$, we get:
$$
(r+s)[\rV_0 - \mu_{L}(\cE)] = (s \rV_0 - (s+1)\rW) + r \rV_0 - (\rV_0 + \ldots + 
\rV_{r-1}) \geq s \rV_r.
$$
By Proposition \ref{prop:stability-logtangent-toricpic2}, $T_{X}(- \log D_{v_r})$ is not
semistable with respect to $L$.

We assume that $r \geq 2$ and $a_{r-1} = a_r = 1$. As $\rV_{r-1} = \rV_r$, we have
\begin{align*}
(r+s)[\rV_0 - \mu_{L}(\cE)] & = (s+1)[\rV_0 - \rW] - \rV_{r-1}
\\ & \quad + [ (r-1) \rV_0 -
(\rV_0 + \ldots + \rV_{r-2})]
\\
& \geq (s+1) \rV_r - \rV_{r-1} \quad \text{because} ~ \rV_0 - \rW \geq \rV_r
\\ & \geq s \rV_r
\end{align*}
By Proposition \ref{prop:stability-logtangent-toricpic2}, $T_{X}(- \log D_{v_r})$ is not
semistable with respect to $L$.

Let $r \geq 1$. We now assume that $a_{r-1} = 0$ and $a_r = 1$.
By using the expressions of Section \ref{sec:toricpicard-two}, we have
$\rV_0 = \ldots = \rV_{r-1} = \rV$ where
$$
\rV = \sum_{k=0}^{s}{ \dbinom{s+r-1}{k} \nu^k } \quad \text{and} \quad
\rW = \sum_{k=0}^{s-1}{ \dbinom{s+r-1}{k} \nu^k } ~.
$$
The points \ref{prop:stability-logtangent-item4} and \ref{prop:stability-logtangent-item5}
of Proposition \ref{prop:stability-logtangent-toricpic2} are not verified in this case.
To check the stability of $\cE$ it is enough to compare
$$
\mu_{L}(\cE) = \dfrac{r \rV + (s+1)\rW}{r+s}
$$
with $\max(\rV, \rW)$. We have
$ (r+s)(\mu_{L}(\cE) - \rW) = r \rV - (r-1) \rW > 0$ because $\rW < \rV$ and
\begin{align*}
(r+s)(\mu_{L}(\cE) - \rV) =& (s+1)\rW - s \rV
\\ = &
\sum_{k=0}^{s-1}{ \dbinom{s+r-1}{k} \nu^k } - s \dbinom{s+r-1}{s} \nu^s =
\rP_0(\nu) ~~.
\end{align*}
By the sign rule of Descartes, the polynomial
$\rP_0$ have a unique positive root $\nu_0$. If $\nu >0$, then $\rP_0(\nu) > 0$
(resp. $\rP_0(\nu) \geq 0$) if and only if $\nu < \nu_0$ (resp. $\nu \leq \nu_0$).
Thus, $T_{X}(- \log D_{v_r})$ is stable (resp. semistable) with respect to
$\pi^{\ast} \cO_{\bP^s}(\nu) \otimes \cO_{X}(1)$ if and only if $0 < \nu < \nu_0$
(resp. $0 < \nu \leq \nu_0 $).
\end{proof}

\begin{cor}\label{cor:logtgt-toricpic-Dvr}
We assume that $r \geq 1$ and $(a_1, \ldots, a_r) \neq (0, \ldots, 0)$.
Let $j \in \{0, \ldots, s \}$ and $D = D_{v_r} + D_{w_j}$. For any $L \in \Amp(X)$, the
logarithmic tangent bundle $T_{X}(- \log D)$ is not semistable with respect to $L$.
\end{cor}

\begin{proof}
If $\cE = T_{X}(- \log D)$, we have $\rV_0 > \mu_{L}(\cE)$. By Proposition
\ref{prop:stability-logtangent-toricpic2}, $T_{X}(- \log D)$ is not semistable with
respect to $L$.
\end{proof}

\subsection{Sum of divisors coming from the base and the bundle: second part}
In this part we study the stability of the logarithmic tangent bundle
$T_{X}(- \log D)$ when $r \geq 2$ and
$$
D \in \{ D_{v_0} \} \cup \{ D_{v_0} + D_{w_j} : 0 \leq j \leq s \} \cup
\{D_{v_0} + D_{v_i} : 2 \leq i \leq r \} ~.
$$
The last case $D = D_{v_0} + D_{v_1}$ will be studied in Section
\ref{sec:logtgt-toricpic-Dv0Dv1}.

\begin{lem}\label{lem:logtgt-toricpic-Dv0}
Let $r \geq 2$, $(a_1, \ldots, a_r) \neq (0, \ldots, 0)$ such that $a_1 < a_r$,
$i \in \{2, \ldots, r \}$ and $j \in \{0, \ldots, s \}$. We set
$\cE = T_{X}(- \log D_{v_0})$, $\cF = T_{X}(- \log(D_{v_0} + D_{v_i}))$ and
$\cG = T_{X}( - \log(D_{v_0} + D_{w_j}))$. For any $L \in \Amp(X)$, the vector bundles
$\cE$, $\cF$ and $\cG$ are not semistable with respect to $L$.
\end{lem}

\begin{proof}
We have $\mu_L(\cE) > \mu_{L}(\cF)$ and $\mu_{L}(\cE) > \mu_{L}(\cG)$. We will show that
$\rV_1 > \mu_{L}(\cE)$. By Lemma \ref{lem:stability-logtangent-toricpic2}, we have
$\rV_1 - \rW \geq \rV_r$. Therefore
\begin{align*}
(r+s)(\rV_1 - \mu_L(\cE) ) & = (r+s)\rV_1 - (\rV_1 + \ldots + \rV_r) - (s+1)\rW
\\ & = (s+1)(\rV_1 - \rW) - \rV_r
\\ & \quad + (r-1)\rV_1 - (\rV_1 + \ldots + \rV_{r-1})
\\ & \geq (s+1) (\rV_1 - \rW) - \rV_r
\\ & \geq s \rV_r
\end{align*}
By Proposition \ref{prop:stability-logtangent-toricpic2}, $\cE$, $\cF$ and $\cG$ are not
semistable with respect to $L$.
\end{proof}

Let $a \in \N^{\ast}$. We now study what happen in Lemma \ref{lem:logtgt-toricpic-Dv0}
when $a_1 = \ldots = a_r = a$. We first consider the case $r=1$.

\begin{theorem}\label{theo:logtgt-toricpic-Dv0}
We assume that $X = \bP \left( \cO_{\bP^s} \oplus \cO_{\bP^s}(a) \right)$.
Let $\rP_1$ and $\rQ$ be the polynomials defined by
$$
\rP_1(x) = (s+1) \sum_{k=0}^{s-1}{ \dbinom{s}{k}a^{s-k-1} x^k} - s \, x^s
~\text{and}~
\rQ(x) = x^s - \sum_{k=0}^{s-1}{ \dbinom{s}{k}a^{s-k-1} x^k} .
$$
Then:
\begin{enumerate}[leftmargin=*]
\item
$T_{X}(- \log D_{v_0})$ is stable (resp. semistable) with respect to
$\pi^{\ast} \cO_{\bP^s}(\nu) \otimes \cO_{X}(1)$ if and only if $0 < \nu < \nu_1$
(resp. $0 < \nu \leq \nu_1$) where $\nu_1$ is the unique positive root of $\rP_1$.
\item
If $j \in \{0, \ldots, s \}$, then $T_{X}(- \log(D_{v_0} + D_{w_j}))$ is semistable with
respect to $\pi^{\ast} \cO_{\bP^s}(\nu) \otimes \cO_{X}(1)$ if and only if $\nu = \nu_3$
where $\nu_3$ is the unique positive root of $\rQ$.
\item
$\nothing = \Stab( T_{X}(- \log(D_{v_0} + D_{v_1})) ) \subsetneq
\sStab( T_{X}(- \log(D_{v_0} + D_{v_1})) ) = \Amp(X).$
\end{enumerate}
\end{theorem}

\begin{proof}
By the sign rule of Descartes, $\rP_1$ and $\rQ$ have
respectively one positive root. Let $L= \pi^{\ast} \cO_{\bP^s}(\nu) \otimes \cO_{X}(1)$,
by using the expressions of Section \ref{sec:toricpicard-two}, we have :
$$
\rV_1 = \nu^s \quad \text{and} \quad
\rW = \sum_{k=0}^{s-1}{\dbinom{s}{k} a^{s-k-1} \nu^k } ~~.
$$
By Proposition \ref{prop:stability-logtangent-toricpic2}, to check the stability of
$\cE = T_{X}(- \log D_{v_0})$, it is enough to compare
$$
\mu_L(\cE) = \dfrac{\rV_1 + (1+s) \rW}{1+s}
$$
with $\max(\rV_1, \rW)$. We have $\mu_{L}(\cE) > \rW$ and
$(1+s)(\mu_{L}(\cE) - \rV_1) = \rP_1(\nu).$ Thus, $\cE$ is stable (resp. semi-stable)
with respect to $L$ if and only if $0 < \nu < \nu_1$ (resp. $0 < \nu \leq \nu_1$).

Let $\cF = T_{X}(- \log(D_{v_0} + D_{w_j}))$. By Proposition
\ref{prop:stability-logtangent-toricpic2}, it is enough to compare
$$
\mu_L(\cF) = \dfrac{\rV_1 + s \rW}{1+s}
$$
with $\max(\rV_1, \rW)$. As $(1+s)(\mu_{L}(\cF) - \rW) = \rQ(\nu)$ and
$(1+s)(\mu_{L}(\cF) - \rV_1) = -s \rQ(\nu)$,
we deduce that $\cF$ is semistable with respect to $L$ if and only if $\nu = \nu_3$.

Let $\cG = T_{X}(- \log(D_{v_0} + D_{v_1}))$. We have $\mu_{L}(\cG) = \rW$.
By Proposition \ref{prop:stability-logtangent-toricpic2}, $\cG$ is semistable with
respect to $L$.
\end{proof}

We now consider the case $r \geq 2$ and $a_1 = \ldots = a_r = a$ with $a \in \N^{\ast}$.

\begin{lem}\label{lem:cardinal-set}
We have
$$
\card \{(\alpha_1, \ldots, \alpha_p) \in \N^p : \alpha_1 + \ldots + \alpha_p= m \} =
\dbinom{m + p -1}{m} .
$$
\end{lem}

We recall that $\rV_{1s} = 1$. By Lemma \ref{lem:cardinal-set}, for all
$k \in \{0, \ldots, s-1 \}$,
$$
\rW_k = \sum_{d_1 + \ldots + d_r = s-k-1}{a_{1}^{d_1} \cdots a_{r}^{d_r}} =
\dbinom{s-k+r-2}{s-k-1} a^{s-k-1}
$$
and
$$
\rV_{1k} = \sum_{d_2 + \ldots + d_r = s-k}{a_{2}^{d_2} \cdots a_{r}^{d_r}} =
\dbinom{s-k+r-2}{s-k} a^{s-k} ~~.
$$
Using the equality
$\dbinom{n}{p-1} = \dfrac{p}{n-p+1} \dbinom{n}{p}$, for any $k \in \{0, \ldots, s-1 \}$,
$$
\rW_k = \dfrac{s-k}{r-1} \dbinom{s-k+r-2}{s-k} a^{s-k-1} = \dfrac{s-k}{a(r-1)} \rV_{1k} ~.
$$

\begin{theorem}\label{theo:logtgt-toricpic-Dv01}
Let $r \geq 2$ and
$X = \bP( \cO_{\bP^s} \oplus \bigoplus_{i=1}^{r} \cO_{\bP^s}(a_i) )$ with
$a_1 = \ldots = a_r = a$ where $a \in \N^{\ast}$.
We set $\cE = T_{X}(- \log D_{v_0})$. Let $\rP_1$ and $\rQ$ be the polynomials defined by:
\begin{align*}
\rP_1(x) &= \sum_{k=0}^{s-1}{ \left[ \left(\dfrac{(s-k)(s+1)}{a(r-1)}-s \right)
\dbinom{s+r-1}{k} \rV_{1k} \right] x^k} - s \dbinom{s+r-1}{s} x^s
\\
\rQ(x) &= \sum_{k=0}^{s-1}{ \left[ \left( r - \dfrac{s-k}{a} \right)
\dbinom{s+r-1}{k} \rV_{1k} \right] x^k} + r \dbinom{s+r-1}{s} x^s
\end{align*}
Then:
\begin{enumerate}[itemsep=2pt, topsep=2pt, leftmargin=*]
\item
If $a < \dfrac{s}{r}$, then $\cE$ is stable (resp. semistable) with respect to
$\pi^{\ast} \cO_{\bP^s}(\nu) \otimes \cO_{X}(1)$ if and only if $\nu_3 < \nu < \nu_1$
(resp. $\nu_3 \leq \nu \leq \nu_1$) where $\nu_1$ and $\nu_3$ are respectively the
positive roots of $\rP_1$ and $\rQ$.
\item
If $\dfrac{s}{r} \leq a < \dfrac{s+1}{r-1}$, then $\cE$ is stable (resp. semistable) with
respect to $\pi^{\ast} \cO_{\bP^s}(\nu) \otimes \cO_{X}(1)$ if and only if
$0 < \nu < \nu_1$ (resp. $0 < \nu \leq \nu_1$) where $\nu_1$ is the positive root of
$\rP_1$.
\item
If $a \geq \dfrac{s+1}{r-1}$, then for any $L \in \Amp(X)$, $\cE$ is not semistable with
respect to $L$.
\end{enumerate}
\end{theorem}

\begin{proof}
We first explain the condition which ensure the existence of positive roots on $\rP_1$
and $\rQ$.
We write
$$
\rP_1(x) = \sum_{k=0}^{s}{\alpha_k \, x^k} \quad \text{and} \quad
\rQ(x) = \sum_{k=0}^{s}{\beta_k \, x^k} ~.
$$
For $k \in \{0, \ldots, s-1 \}$, $\alpha_k >0$ if and only if
$k < \left( 1 - \dfrac{a(r-1)}{s+1} \right) s$. Therefore,
\begin{itemize}
\item
If $~\dfrac{a(r-1)}{s+1} \geq 1$, then for any $x \geq 0$, $\rP_1(x) < 0$.
\item
If $~\dfrac{a(r-1)}{s+1} <1$, then $\rP_1$ has only one positive root $\nu_1$.
\end{itemize}
For $k \in \{0, \ldots, s-1 \}$, $\beta_k < 0$ if and only if $k < s - r a$.
Therefore,
\begin{itemize}
\item
If $r a \geq s$, then for any $x \geq 0$, $\rQ(x) > 0$.
\item
If $r a < s$, then $\rQ$ has only one positive root $\nu_3$.
\end{itemize}
We now show that : If $ a < \dfrac{s}{r}$, then $\nu_3 < \nu_1$. As
\begin{align*}
\dfrac{\rP_1(x)}{-s} - \dfrac{\rQ(x)}{r} & =
\sum_{k=0}^{s-1}{ \left[ \left(-\dfrac{(s-k)(s+1)}{a \, s (r-1)} +
\dfrac{s-k}{r \, a} \right) \dbinom{s+r-1}{k} \rV_{1k} \right] x^k }
\\ & =
\dfrac{-(r+s)}{a \, s \, r (r-1)} \sum_{k=0}^{s-1}{
(s-k) \dbinom{s+r-1}{k} \rV_{1k} \, x^k } = \rP(x)
\end{align*}
and $\dfrac{\rP_1(\nu_3)}{-s} - \dfrac{\rQ(\nu_3)}{r} = \rP(\nu_3) < 0$, we deduce that
$\rP_1(\nu_3) > 0$.
By using the fact that, for $x \geq 0$, $\rP_1(x) > 0$ if and only if $0 \leq x < \nu_1$,
we deduce that $\nu_3 < \nu_1$.

We can now study the stability of $\cE$. As $a_1 = \ldots = a_r$, we have
$\rV_1 = \ldots = \rV_r$. Therefore
$$
\mu_{L}(\cE) = \dfrac{r \rV_1 + (s+1) \rW }{r+s} ~.
$$
By Proposition \ref{prop:stability-logtangent-toricpic2}, to check the stability of $\cE$,
it is enough to compare $\mu_L(\cE)$ with $\max(\rV_1, \rW)$. We have
\begin{align*}
(r+s) ( \mu_L(\cE) - \rV_1) & = -s \rV_1 + (s+1) \rW
= \rP_1(\nu)
\end{align*}
and
\begin{align*}
(r+s) ( \mu_L(\cE) - \rW) & = r \rV_1 - (r-1) \rW
= \rQ(\nu)
\end{align*}
Therefore,
\begin{enumerate}[itemsep=4pt, topsep=4pt]
\item[i.]
If $a \geq \dfrac{s+1}{r-1}$, then for any $\nu >0$, $\rP_1(\nu) < 0$.
\item[ii.]
If $a < \dfrac{s+1}{r-1}$, then $\rP_1(\nu) > 0$ (resp. $\rP_1(\nu) \geq 0$)
if and only if $0 < \nu < \nu_1$ (resp. $0 < \nu \leq \nu_1$).
\item[iii.]
If $a \geq \dfrac{s}{r}$, then for any $\nu > 0$, $\rQ(\nu) >0 \,$.
\item[iv.]
If $a < \dfrac{s}{r}$, then $\rQ(\nu) > 0$ (resp. $\rQ(\nu) \geq 0$)
if and only if $\, \nu > \nu_3$ (resp. $\, \nu \geq \nu_3$).
\end{enumerate}
The point {\it i.} shows the third point of the theorem. By using the points {\it ii.}
and {\it iv.}, we get the first point of theorem. Finally, the points {\it ii.} and
{\it iii.} give the second point of the theorem.
\end{proof}

\begin{prop}\label{prop:logtgt-toricpic-Dv02}
Let $r \geq 2$ and
$X = \bP( \cO_{\bP^s} \oplus \bigoplus_{i=1}^{r} \cO_{\bP^s}(a_i) )$ with
$a_1 = \ldots = a_r = a$ where $a \in \N^{\ast}$.
Let $i \in \{1, \ldots, r \}$ and $j \in \{0, \ldots, s \}$. We set
$\cF_j = T_{X}( - \log(D_{v_0} + D_{w_j}))$, $\cG_i = T_{X}(- \log(D_{v_0} + D_{v_i}))$
and
$$
\rQ(x) = \sum_{k=0}^{s-1}{ \left[ \left(1 - \dfrac{s-k}{a(r-1)} \right)
\dbinom{s+r-1}{k} \rV_{1k} \right] x^k} + \dbinom{s+r-1}{s} x^s ~.
$$
\begin{enumerate}[leftmargin=*]
\item
If $a \geq \dfrac{s}{r-1}$, then for any $L \in \Amp(X)$, $\cF_j$ and $\cG_i$ are not
semistable with respect to $L$.
\item
If $a < \dfrac{s}{r-1}$, then $\cF_j $ and $\cG_i$ are semistable with respect to
$\pi^{\ast} \cO_{\bP^s}(\nu) \otimes \cO_{X}(1)$ if and only if $\nu = \nu_3$
where $\nu_3$ is the unique root of $\rQ$.
\end{enumerate}
\end{prop}

\begin{proof}
We first study the polynomial $\rQ$. We write $\rQ(x) = \sum_{k=0}^{s}{\alpha_k \, x^k}$.
For $k \in \{0, \ldots, s-1 \}$, $\alpha_k >0$ if and only if $k < s - a(r-1)$.
\begin{itemize}
\item
If $~ a \geq \dfrac{s}{r-1}$, then for any $x \geq 0$, $\rQ(x) > 0$.
\item
If $~ a < \dfrac{s}{r-1}$, then $\rQ$ has a unique positive root $\nu_3$.
\end{itemize}
As $a_1 = \ldots = a_r$, we have $\rV_1 = \ldots = \rV_r$. Thus,
$$
\mu_{L}(\cF_j) = \dfrac{r \rV_1 + s \rW}{r+s} \quad \text{and} \quad
\mu_{L}(\cG_i) = \dfrac{(r-1) \rV_1 + (s+1) \rW}{r+s} ~.
$$
By Proposition \ref{prop:stability-logtangent-toricpic2}, to check the stability of
$\cF_j$ (resp. $\cG_i$), it is enough to compare $\mu_{L}(\cF_j)$ (resp. $\mu_L(\cG_i)$)
with $\max(\rV_1, \rW)$. We have
$$
\left\lbrace
\begin{array}{ll}
(r+s)(\mu_L(\cF_j) - \rV_1) = s(\rW - \rV_1) = - s \rQ(\nu) \\
(r+s)(\mu_L(\cF_j) - \rW) = r(\rV_1 - \rW) = r \rQ(\nu)
\end{array}
\right.
$$
and
$$
\left\lbrace
\begin{array}{ll}
(r+s)(\mu_L(\cG_i) - \rV_1) = (s+1)(\rW - \rV_1) = - (s+1) \rQ(\nu) \\
(r+s)(\mu_L(\cG_i) - \rW) = (r-1)(\rV_1 - \rW) = (r-1) \rQ(\nu)
\end{array}
\right.
$$

If $a \geq \dfrac{s}{r-1}$, then for any $\nu >0$, $Q(\nu) >0$; thus,
$\mu_L(\cF_j) < \rV_1$ and $\mu_L(\cG_i) < \rV_1$. Hence, for any $\nu >0$, $\cF_j$ and
$\cG_i$ are not semistable with respect to $L$.

If $a < \dfrac{s}{r-1}$, then by the above equalities, $\cF_j$ and $\cG_i$ are semistable
with respect to $\pi^{\ast} \cO_{\bP^s}(\nu) \otimes \cO_{X}(1)$ if and only if
$\nu = \nu_3$ where $\nu_3$ is the positive root of $\rQ$.
\end{proof}

\subsection{Sum of divisors coming from the bundle}
\label{sec:logtgt-toricpic-Dv0Dv1}

In this part, we assume that $\cE = T_{X}(- \log(D_{v_0} + D_{v_1}))$. We study the
stability of $\cE$ when $r \geq 2$ and $a_1 < a_r$. The stability of $\cE$ when $r=1$ was
treated in Theorem \ref{theo:logtgt-toricpic-Dv0}. When $r \geq 2$, in Proposition
\ref{prop:logtgt-toricpic-Dv02}, we studied the stability of $\cE$ when
$a_1 = \ldots = a_r$.

\begin{prop}\label{prop:logtgt-toricpic-Dv0Dv1}
Let $(a_1, \ldots, a_r) \neq (0, \ldots, 0)$ and $$\cE= T_{X}(- \log(D_{v_0} + D_{v_1})).$$
\begin{enumerate}
\item
If $a_1 = 0$, then for any $L \in \Amp(X)$, $\cE$ is not semistable with respect to $L$.
\item
If $r \geq 3$ and $a_2 < a_r$, then for any $L \in \Amp(X)$, $\cE$ is not semistable with
respect to $L$.
\end{enumerate}
\end{prop}

\begin{proof}
We have
$$
\mu_{L}(\cE) = \dfrac{(s+1) \rW + \rV_2 + \ldots + \rV_r}{r+s} .
$$
{\it First point.}
As $\card \{2, \ldots, r \} = r-1$, by using the point
\ref{prop:stability-logtangent-item4} of Proposition
\ref{prop:stability-logtangent-toricpic2} with $I' = \{2, \ldots, r \}$, we get
$$
\dfrac{1}{r+s-1}\left( \sum_{i \in I'}{\rV_i} + (s+1) \rW \right) = \dfrac{1}{r+s-1}
\left( \rV_2 + \ldots + \rV_r + (s+1) \rW \right) ~.
$$
Thus, $\cE$ is not semistable with respect to $L$.
\\
{\it Second point.}
By Lemma \ref{lem:stability-logtangent-toricpic2}, we have $\rV_2 - \rW \geq \rV_r$.
Therefore,
\begin{align*}
(r+s)(\rV_2 - \mu_L(\cE) & = (r+s)\rV_2 - (\rV_2 + \ldots + \rV_r) - (s+1)\rW
\\ & =
(s+1)(\rV_2 - \rW) + ((r-1)\rV_2 - (\rV_2 + \ldots + \rV_{r}))
\\ & \geq
(s+1) (\rV_2 - \rW)
\\ & \geq (s+1) \rV_r
\end{align*}
Hence, by Proposition \ref{prop:stability-logtangent-toricpic2}, $\cE$ is not semistable
with respect to $L$.
\end{proof}

We now assume that $0 < a_1 < a_2 = \ldots = a_r$. By Proposition
\ref{prop:stability-logtangent-toricpic2}, to check the stability of $\cE$, it is enough
to compare $\mu_{L}(\cE)$ with $\max(\rV_2, \rW)$. We have
$$
\mu_{L}(\cE) = \dfrac{(r-1)\rV_2 + (s+1)\rW}{r+s}
$$
and
\begin{equation}\label{eq:logtgt-toricpic-Dv0Dv1}
\left\lbrace
\begin{array}{l}
(r+s)[\mu_{L}(\cE) - \rV_2] = (s+1)(\rW - \rV_2) \\
(r+s)[\mu_{L}(\cE) - \rW] = -(r-1)(\rW - \rV_2)
\end{array}
\right.
\end{equation}
The vector bundle $\cE$ is semistable with respect to
$L = \pi^{\ast} \cO_{\bP^s}(\nu_3) \otimes \cO_{X}(1)$ if and only if $\nu_3$ is a positive
root of the polynomial $\rQ(\nu) = \rW - \rV_2$ ($\rW$ and $\rV_2$ depend on $\nu$).
We first consider the case $r=2$.

\begin{prop}\label{prop:logtgt-toricpic-Dv0Dv1r2}
Let $r = 2$ and $0 < a_1 < a_2$. We define
$\delta = \dfrac{\ln(1+a_2 - a_1)}{\ln(a_2) - \ln(a_1)}$ and the polynomial $\rQ$ by
$$
\rQ(x) = \sum_{k=0}^{s-1}{ \left[ \dfrac{a_{1}^{s-k}}{a_2 - a_1} \left(
\left( \dfrac{a_2}{a_1} \right)^{s-k} - 1 -a_2 + a_1 \right) \dbinom{s+1}{k}
\right] x^k} - (s+1) x^{s}~.
$$
Then:
\begin{enumerate}
\item
If $s \leq \delta$, then $\sStab( \, T_{X}(- \log(D_{v_0} + D_{v_1})) \, ) = \nothing$;
\item
If $s > \delta$, then $T_{X}(- \log(D_{v_0} + D_{v_1}))$ is semistable with respect to
$\pi^{\ast} \cO_{\bP^s}(\nu) \otimes \cO_{X}(1)$ if and only if $\nu = \nu_3$ where
$\nu_3$ is the positive root of $\rQ$.
\end{enumerate}
\end{prop}

\begin{proof}
We have
\begin{align*}
\rV_2 &= \sum_{k=0}^{s}{ \dbinom{s+1}{k} a_1^{s-k} \, \nu^k}
\\
\rW &= \sum_{k=0}^{s-1}{ \dbinom{s+1}{k} \left(
\sum_{d_1 + d_2 = s-k-1}{a_{1}^{d_1} a_{2}^{d_2}} \right) \, \nu^k }
= \sum_{k=0}^{s-1}{ \dfrac{a_2^{s-k} - a_{1}^{s-k}}{a_2 - a_1} \dbinom{s+1}{k} \, \nu^k }
\end{align*}
hence,
\begin{align*}
\rW - \rV_2 = & \sum_{k=0}^{s-1}{ \left[ \dfrac{a_{1}^{s-k}}{a_2 - a_1} \left(
\left( \dfrac{a_2}{a_1} \right)^{s-k} - 1 -a_2 + a_1 \right) \dbinom{s+1}{k}
\right] \nu^k}
\\ & - (s+1) \nu^{s} = \rQ(\nu) ~.
\end{align*}
We write $\rQ(x) = \sum_{k=0}^{s}{\alpha_k \, x^k}$. The inequality
$\left( \dfrac{a_2}{a_1} \right)^{s-k} - 1 -a_2 + a_1 > 0$ gives
$$
k < s - \dfrac{\ln(1+a_2 - a_1)}{\ln(a_2) - \ln(a_1)} = s - \delta .
$$
Hence, by the Descartes rule, $\rQ$ has a unique positive root $\nu_3$ if and only if
$s > \delta$.
\end{proof}

We now consider the case where $r \geq 3$ and $a_1 < a_r$. Let $a, b \in \N^{\ast}$ such
that $a < b$. We assume that $a_1 = a$ and $a_2 = \ldots = a_r = b$.
By Lemma \ref{lem:cardinal-set}, for any $k \in \{0, \ldots, s-1 \}$, we have
\begin{align*}
\rW_k = \sum_{\substack{d_1 + \ldots + d_r \\ = s-k-1}}{a_{1}^{d_1} \cdots a_{r}^{d_r}}
& =
\sum_{j=0}^{s-k-1}{ a^{s-k-1-j} \left( \sum_{d_2 + \ldots + d_r = j}{b^j } \right) }
\\ & =
\sum_{j = 0}^{s-k-1}{ \dbinom{j+r-2}{j} b^j \, a^{s-k-1-j} }
\end{align*}
and
\begin{align*}
\rV_{2k}= \rV_{rk} = \sum_{\substack{d_1 + \ldots + d_{r-1} \\ = s-k}}{
a_{1}^{d_1} \cdots a_{r-1}^{d_{r-1}} }
& =
\sum_{j=0}^{s-k}{a^{s-k-j} \left( \sum_{d_2 + \ldots + d_{r-1} = j}{b^j} \right) }
\\ & =
\sum_{j = 0}^{s-k}{ \dbinom{j+r-3}{j} b^j \, a^{s-k-j} } ~.
\end{align*}
For $p \in \{1, \ldots, s \}$, we set $\alpha_p = \rW_{s-p} - \rV_{2,s-p}$. We have
$$
\alpha_p =
\sum_{j = 0}^{p-1}{ \dbinom{j+r-2}{j} b^j \, a^{p-1-j} } -
\sum_{j = 0}^{p}{ \dbinom{j+r-3}{j} b^j \, a^{p-j} } ~.
$$
Let $\rQ_s$ be the polynomial defined by
$$
\rQ_{s}(x) = \sum_{k=0}^{s-1}{ \dbinom{s+r-1}{k}{\alpha_{s-k} \, x^k}} -
\dbinom{s+r-1}{s} x^s ~.
$$
We have $\rW - \rV_2 = \rQ_{s}(\nu)$. We now search a condition on $s$ which
ensure the existence of positive root on $\rQ_s$. By using the identity 
$\dbinom{n}{p-1} = \dfrac{p}{n-p+1} \dbinom{n}{p}$, we have
\begin{align*}
\alpha_p & = \sum_{j=0}^{p-1}{ \dfrac{j+1}{r-2} \dbinom{j+r-2}{j+1} b^j \, a^{p-1-j}} -
\sum_{j=1}^{p}{ \dbinom{j+r-3}{j} b^{j} \, a^{p-j}} - a^p
\\ & =
\sum_{j=0}^{p-1}{\left[ \left( \dfrac{j+1}{r-2} - b \right) \dbinom{j+r-2}{j+1}
b^{j} \, a^{p-1-j} \right] } - a^p ~~.
\end{align*}
If $1 \leq p \leq b(r-2)$, then for all $j \in \{0, \ldots, p-1 \}$, we have
$$
\dfrac{j+1}{r-2} - b \leq \dfrac{p}{r-2} - b = \dfrac{p - b(r-2)}{r-2} \leq 0 ~;
$$
thus $\alpha_p < 0$. Hence, if $\alpha_p > 0$, then we must have $p > b(r-2)$.
If there is $p > b(r-2)$ such that $\alpha_p > 0$, then for any $q \geq p$, we have
$\alpha_q > 0$; this follows from these equalities.
\begin{align*}
\alpha_{p+1} = & \sum_{j=0}^{p-1}{\left[ \left( \dfrac{j+1}{r-2} - b \right)
\dbinom{j+r-2}{j+1} b^{j} \, a^{p-j} \right] } - a^{p+1}
\\ & +
\left( \dfrac{p+1}{r-2} - b \right) \dbinom{p+r-2}{p+1} b^{p}
\\ = &
a \, \alpha_p + \left( \dfrac{p+1}{r-2} - b \right) \dbinom{p+r-2}{p+1} b^{p}
\end{align*}
We denote by $\lfloor x \rfloor$ the floor of $x \in \R$.

\begin{lem}\label{lem:logtgt-toricpic-Dv0Dv1r3}
Let $m = b(r-2)$. There is a unique integer
$\delta_r \in \left[ m+1 \, ; \, \lfloor 3.2 m \rfloor + 1 \right]$ such that: if
$p \leq \delta_r$, then $\alpha_p \leq 0$ and if $p > \delta_r$, then $\alpha_{p} > 0$.
\end{lem}

Let $\delta_r$ be the integer given in Lemma \ref{lem:logtgt-toricpic-Dv0Dv1r3}.
If $s \leq \delta_r$, then all coefficients of $\rQ_s$ are negative; thus, for any
$x >0$, $\rQ_s(x) < 0$. If $s > \delta_r$, then by the Descartes rule, $\rQ_s$ has
a only one positive root $\nu_3$. We deduce :

\begin{prop}\label{prop:logtgt-toricpic-Dv0Dv1r3}
Let $\, r \geq 3 \,$ and $\, a, \, b \in \N^{\ast}$ such that $\, a< b \,$ and
$a_1 = a \, $, $a_2 = \ldots = a_r = b \,$.
\begin{enumerate}
\item
If $\, s \leq \delta_r \,$, then
$\, \sStab( \, T_{X}(- \log(D_{v_0} + D_{v_1})) \, ) = \nothing \,$;
\item
If $s > \delta_r$, then $T_{X}(- \log(D_{v_0} + D_{v_1}))$ is
semistable with respect to $\pi^{\ast} \cO_{\bP^s}(\nu) \otimes \cO_{X}(1)$
if and only if $\nu = \nu_3$.
\end{enumerate}
\end{prop}

We now give the proof of Lemma \ref{lem:logtgt-toricpic-Dv0Dv1r3}.

\begin{proof}
We have
$$
\alpha_p =
\sum_{j=0}^{m-1}{\left[ \left( \dfrac{j+1}{r-2} - b \right) \dbinom{j+r-2}{j+1}
b^{j} \, a^{p-1-j} \right] } - a^p + \beta_p
$$
where
\begin{align*}
\beta_p & = \sum_{l=m}^{p-1}{\left[ \left( \dfrac{l+1 - (r-2)b}{r-2} \right)
\dbinom{l+r-2}{l+1} b^{l} \, a^{p-1-l} \right] }
\\ & =
\sum_{l=0}^{p-1-m}{ \left[ \dfrac{l+1}{r-2} \dbinom{l+m+r-2}{l+m+1}
b^{l+m} \, a^{p-1-(l+m)} \right]}
\end{align*}
The goal of this proof is to find an integer $p$ such that $\alpha_p > 0$. We will
search an integer $p$ such that
\begin{equation}\label{eq:logtgt-toricpic-Dv0Dv1r3}
\beta_p \geq a^p + \sum_{j=0}^{m-1}{ b \dbinom{j+r-2}{j+1} b^{j} \, a^{p-1-j}} ~.
\end{equation}
We have
$\dbinom{l+m+r-2}{l+m+1} = \dbinom{l+m+r-2}{r-3} = \dfrac{r-2}{l+m+1}
\dbinom{l+m+r-2}{r-2}.$
From the equality
$$
\sum_{j=n}^{r}{ \dbinom{j}{n} } = \dbinom{r+1}{n+1} \quad \text{we have} \quad
\dbinom{r+1}{n+1} = 1 + \sum_{j=0}^{r-n-1}{ \dbinom{j+n+1}{n} }~;
$$
Hence,
\begin{align*}
\dbinom{l+m+r-2}{r-2} = 1 + \sum_{j=0}^{l+m-1}{\dbinom{j+r-2}{r-3}} & =
1 + \sum_{j=0}^{l+m-1}{\dbinom{j+r-2}{j+1}}
\\
& \geq 1 + \sum_{j=0}^{m-1}{ \dbinom{j+r-2}{j+1}} .
\end{align*}
Thus,
\begin{align*}
\beta_p & \geq \left( 1 + \sum_{j=0}^{m-1}{\dbinom{j+r-2}{j+1}} \right)
\sum_{l=0}^{p-1-m}{ \dfrac{l+1}{l+1+m} \, b^{l+m} \, a^{p-1-(l+m) } }
\\ & \geq
b^{m} \, a^{p-1-m}
\left( 1 + \sum_{j=0}^{m-1}{\dbinom{j+r-2}{j+1}} \right)
\sum_{l=0}^{p-1-m}{ \dfrac{l+1}{l+1+m} \left(\dfrac{b}{a}\right)^l } ~~.
\end{align*}
We have
\begin{align*}
\sum_{l=0}^{p-1-m}{\dfrac{l+1}{l+1+m}} & \geq
\sum_{l=0}^{p-1-m}{ \int_{l}^{l+1}{\dfrac{x \, dx}{x+m}} }
\\ & \geq
\int_{0}^{p-m}{ \dfrac{x}{x+m} dx}
\\ & \geq p-m - m \ln \left( \dfrac{p}{m} \right).
\end{align*}
If $k = \dfrac{p}{m} \geq 3.2$, then
$(k-1- \ln(k)) > 1$. If we set $p = \lfloor 3.2 \, m \rfloor + 1$, we get
$$
\sum_{l=0}^{p-1-m}{ \dfrac{l+1}{l+1+m} \left(\dfrac{b}{a}\right)^l } \geq
\sum_{l=0}^{p-1-m}{ \dfrac{l+1}{l+1+m} } \geq m \geq b ~.
$$
For $j \in \{0, \ldots, m-1 \}$, we have
$b^{m} \, a^{p-1-m} \geq b^j \, a^{p-1-j}$. If $p = \lfloor 3.2 \, m \rfloor + 1$,
we get
\begin{align*}
\beta_p & \geq b \, b^{m} \, a^{p-1-m}
\left( 1 + \sum_{j=0}^{m-1}{\dbinom{j+r-2}{j+1}} \right)
\\ & \geq
b^{m+1} \, a^{p-1-m} + \sum_{j=0}^{m-1}{ b \dbinom{j+r-2}{j+1} b^{j} \, a^{p-1-j}}
~;
\end{align*}
this proves the inequality (\ref{eq:logtgt-toricpic-Dv0Dv1r3}). Thus, $\alpha_{p} > 0$
for $p = \lfloor 3.2 \, m \rfloor + 1$. Hence, we deduce the existence of the integer
$\delta_r$ in the interval $[m+1 \, ; \, \lfloor 3.2 \, m \rfloor + 1]$.
\end{proof}

\section{Application on log smooth toric del Pezzo pairs}
\label{sec:logdpezzo-pairs}

The goal of this part is to study the stability of the equivariant logarithmic tangent
bundle $T_{X}(- \log D)$ with respect to $-(K_X + D)$ when the pair $(X, D)$ is log del
Pezzo. We assume that $N = M = \Z^2$ and the pairing
$\< \cdot , \cdot \> : M \times N \rightarrow \Z$ is the usual dot product.

\begin{example}\label{examp:Hirzebruch-surface}
Let $r \in \N$ and $\Sigma$ the fan of the Hirzebruch surface
$\F_r = \bP( \cO_{\bP^1} \oplus \cO_{\bP^1}(r) )$.
The rays of $\Sigma$ are the half lines generated by the vectors
$u_1 = e_1$, $u_2 = e_2$, $u_3 = -e_1 + r \, e_2$ and $u_0 = -e_2$. Hence,
$$
\Sigma = \{0 \} \cup \left\lbrace \Cone(u_i) : 0 \leq i \leq 3 \right\rbrace \cup
\left\lbrace \Cone(u_i, u_{i+1}) : 0 \leq i \leq 3 \right\rbrace
$$
where $u_4 = u_0$. For any $i \in \{0, \ldots, 3\}$, we denote by $D_i$ the divisor
corresponding to the ray $\Cone(u_i)$. By \cite[Proposition 6.4.4]{CLS}, we have
\begin{equation}\label{eq:intersection-smooth-dim2}
\left\lbrace
\begin{array}{ll}
D_i \cdot D_i = - \gamma_i & \\
D_k \cdot D_i = 1 & \text{if} ~ k \in \{i-1, i+1 \} \\
D_k \cdot D_i = 0 & \text{if} ~ k \notin \{i-1, i, i+1 \}
\end{array}
\right.
\end{equation}
where $\gamma_i = \det(u_{i-1}, u_{i+1})$. So, $\gamma_0= -r$, $\gamma_1= 0$,
$\gamma_2= r$ and $\gamma_3= 0$.
If $\pi: \F_r \rightarrow \bP^1$ is the projection map, then the invariant divisors
$D_1, D_3$ are the fibers of $\pi$ and the invariant divisors $D_0, D_2$ can be seen
as sections.
\end{example}

By using the classification of log Del Pezzo surfaces given by Maeda
\cite[\S 3.4]{Maeda-fano3} (see e.g. \cite{Nap-DPezzo} for a proof in a toric setting),
we get the following description of equivariant log Del Pezzo pairs.

\begin{prop}
Let $X$ be a smooth complete toric surface and $D$ a reduced torus-invariant divisor
on $X$. Then, the pair $(X, D)$ is log Del Pezzo if:
\begin{enumerate}[itemsep=2pt, topsep=2pt]
\item
$X= \bP^2$ and $D= D'$ where $D'$ is a line;
\item
$X=\bP^2$ and $D= D' + D''$ where $D'$ and $D''$ are two lines;
\item
$X= \F_r$ and $D= D'$ where $D'$ is a section with $(D')^2 = - r$;
\item
$X= \F_r$ and $D= D' + D''$ where $D'$ is a section with $(D')^2 = - r$ and
$D''$ is a fiber;
\item
$X= \F_1$ and $D= D'$ where $D'$ is a section such that $(D')^2 = 1$;
\item
$X= \F_0$ and $D= D''$ where $D''$ is a fiber.
\end{enumerate}
\end{prop}

\begin{rem}
If $D, D'$ are two invariant lines of $\bP^2$, then according to Corollary
\ref{cor:stability-logtgt-weighted} and Corollary \ref{cor:logtangent-unstability},
$T_{\bP^2}(- \log D)$ is polystable with respect to $-(K_{\bP^2} + D)$ and
$T_{\bP^2}(- \log(D + D'))$ is unstable with respect to $-(K_{\bP^2} + D + D')$.
\end{rem}

\begin{prop}\label{prop:stability-wrt-anticanonical-dim2}
Let $X = \F_r$ and $D_0, D_1, D_2, D_3$ the divisors defined in Example
\ref{examp:Hirzebruch-surface}. Then:
\begin{enumerate}[itemsep=2pt, topsep=2pt]
\item
If $r=0$ and $ D \in \{D_i : 0 \leq i \leq 3 \} \cup \{D_0 + D_1, D_0 + D_3 \} \cup
\{D_2 + D_1, D_2 + D_3 \}$, $T_{X}(-\log D)$ is polystable with respect to $-(K_X + D)$;
\item
If $r=1$, $T_{X}(- \log D_0)$ is stable with respect to $-(K_X + D_0)$;
\item
If $r \geq 1$ and $D \in \{D_2, D_2 + D_1, D_2 + D_3 \}$, $T_{X}(- \log D)$ is unstable
with respect to $-(K_X + D)$.
\end{enumerate}
\end{prop}

\begin{proof}
We first note that, the divisors $D_0, D_1, D_2, D_3$ of $\F_r$ defined in Example
\ref{examp:Hirzebruch-surface} are given in Section \ref{sec:toricpicard-two} by
$D_0 = D_{v_0}$, $D_1 = D_{w_1}$, $D_2 = D_{v_1}$ and $D_3 = D_{w_0}$ where $v_1 = e_2$
and $w_1 = e_1$. Thus, by Equation (\ref{eq:divisor-toricpicard-two}),
$$
D_1 \sim_\lin D_3 \quad \text{and} \quad D_2 \sim_\lin D_0 - r D_3 ~.
$$
If $\alpha D_3 + \beta D_0$ is an ample divisor of $\F_r$, then the number $\nu$ used in
the results of Sections \ref{sec:stab-product-projective} and
\ref{sec:stab-logtangent-toricpic2} is defined by $\nu= \dfrac{\alpha}{\beta}$.
Using Remark \ref{rem:stab-prodprojective}
and Propositions \ref{prop:stab-prodprojective-logtgt1} and
\ref{prop:stab-prodprojective-logtgt3}, we get the first point.

Let $r=1$. We have $-(K_X + D_0) \sim_\lin D_0 + D_3$ and $\nu = 1$. As the polynomial
$\rP_1$ defined in Theorem \ref{theo:logtgt-toricpic-Dv0} is $\rP_1 = 2-x$ and
$0 < \nu < 2$, we deduce that $T_X( - \log D_0)$ is stable with respect to $-(K_X + D_0)$.
\\
The polynomial $\rP_0$ of Theorem \ref{theo:logtgt-toricpic-Dvr} is given by
$\rP_0 = 1-x$. As $-(K_X + D_2) \sim_\lin 2 D_3 + D_0$ and $\nu = 2$, we deduce that
$T_X(- \log D_2)$ is unstable with respect to $-(K_X + D_2)$. 

If $r \geq 2$, then according to Theorem \ref{theo:logtgt-toricpic-Dvr},
$T_X(- \log D_2)$ is unstable with respect to $-(K_X + D)$.
Finally, if $r \geq 1$ and $D \in \{D_2 + D_1, D_2 + D_3\}$, then $T_{X}(- \log D)$ is
unstable with respect to $-(K_X + D)$ (cf. Corollary \ref{cor:logtgt-toricpic-Dvr}).
\end{proof}

\begin{rem}
If $X$ is a smooth toric variety and $D$ an invariant divisor on $X$ such that
$-(K_X + D)$ is ample, by \cite[Theorem 1.2]{BB13}, $(X, D)$ admits a toric log
K\"ahler--Einstein metric if and only if $0$ is the barycenter of the polytope
$P_{(X,D)}$ corresponding to $-(K_X + D)$.
In this case, according to \cite[Theorem 1.4]{Li20}, the orbifold tangent sheaf
$T_{X}(- \log D)$ is polystable with respect to $-(K_X + D)$.
In this paper we studied the stability of $T_{X}(- \log D)$ when $0$ is not the
barycenter of $P_{(X,D)}$. Therefore, we do not have the existence of K\"ahler--Einstein
metrics on these logarithmic pairs $(X, D)$.
\end{rem}

\bibliography{Biblio}
\bibliographystyle{amsplain}

\end{document}